\renewcommand{\phi}{\varphi}
\renewcommand{\epsilon}{\varepsilon}
\renewcommand{\hat}{\widehat}
\renewcommand{\tilde}{\widetilde}
\newcommand{\C}{\mathbb{C}}
\newcommand{\BB}{\mathcal{B}}
\newcommand{\BBh}{\hat{\BB}}
\newcommand{\FF}{\mathcal{F}}
\newcommand{\GG}{\mathcal{G}}
\newcommand{\HH}{H}
\newcommand{\HHH}{\mathcal{H}}
\newcommand{\NNT}{\tilde{N}}
\newcommand{\OO}{\mathcal{O}}
\newcommand{\OOh}{\hat{\OO}}
\newcommand{\Th}{\hat{T}}
\newcommand{\TT}{T}
\newcommand{\TTT}{\tilde{T}}
\newcommand{\WW}{\Xi}
\newcommand{\Xb}{\overline{X}}
\newcommand{\st}{\tilde{s}}
\newcommand{\ft}{\tilde{f}}
\newcommand{\fh}{\hat{f}}
\newcommand{\spr}{s^\prime}
\newcommand{\hhat}{\tilde{h}}
\newcommand{\htp}{\hhat^\prime}
\newcommand{\gp}{g^\prime}
\newcommand{\hhatinv}{(\tilde{h}^{-1})}
\newcommand{\varthetat}{\tilde{\vartheta}}
\newcommand{\thetat}{\tilde{\theta}}
\newcommand{\thetatp}{(\thetat^\prime)}
\newcommand{\etat}{\tilde{\eta}}
\newcommand{\etatp}{(\etat^\prime)}
\newcommand{\that}{\hat{\theta}}
\newcommand{\ehat}{\hat{\epsilon}}
\newcommand{\phat}{\hat{\phi}}
\newcommand{\nablat}{\hat{\nabla}}
\newcommand{\omegat}{\hat{\omega}}
\newcommand{\phicy}{\rho}
\newcommand{\hook}{\lrcorner\:}
\newcommand{\ddbar}{\partial\overline{\partial}}
\newcommand{\dbar}{\overline{\partial}}
\newcommand{\al}{\alpha}
\newcommand{\bbar}{\overline{\beta}}
\newcommand{\abar}{\overline{\alpha}}
\newcommand{\gbar}{\overline{\gamma}}
\newcommand{\lbar}{\overline{\lambda}}
\newcommand{\mbar}{\overline{\mu}}
\newcommand{\oneb}{\overline{1}}
\newcommand{\nb}{\overline{n}}
\newcommand{\Vol}{\textnormal{Vol}}
\newcommand{\const}{\textnormal{const}}
\renewcommand{\Re}{\textnormal{Re}}
\newcommand{\Ein}{\textnormal{Ein}}
\newcommand{\Einp}{\Ein^\prime}
\newcommand{\Ric}{\textnormal{Ric}}
\newcommand{\Riem}{\textnormal{Riem}}
\newcommand{\Scal}{\textnormal{Scal}}
\newcommand{\End}{\textnormal{End}}
\theoremstyle{plain} 
\newtheorem{thm}{Theorem}[section] 
\newtheorem{prop}[thm]{Proposition} 
\newtheorem{lem}[thm]{Lemma}
\theoremstyle{definition} 
\newtheorem{df}[thm]{Definition}  
\theoremstyle{remark} 
\newtheorem{rem}{Remark\!\!}
\theoremstyle{plain} 
\newtheorem{question}{Question\!\!}
\theoremstyle{plain} 
\newtheorem{theo}{Theorem~\ref{thm:ache}\!\!}
\theoremstyle{plain} 
\newtheorem{propo}{Proposition~\ref{prop:obs}\!\!}
\theoremstyle{plain} 
\newtheorem{mt}{Main Theorem\!\!}
\begin{document}   

\title[Approximately Einstein ACH metrics]{Approximately Einstein ACH metrics, volume renormalization, and an invariant for contact manifolds} 

\alttitle{M\'etriques presque d'Einstein ACH, renormalisation de volume, et un invariant pour les vari\'et\'es de contact}

\subjclass{primary 53D10; secondary 53B05, 53C25}

\keywords{ACH metric; approximately Einstein metric; volume renormalization; contact manifold; almost CR structure; CR $Q$-curvature; CR obstruction tensor}


\author{Neil Seshadri}   

\address{Graduate School of Mathematical Sciences, The University of Tokyo         
\newline\indent 3--8--1 Komaba, Meguro, Tokyo 153--8914, Japan} 
\email{seshadri@ms.u-tokyo.ac.jp}  

\begin{abstract} 
To any smooth compact manifold $M$ endowed with a contact structure $H$ and partially integrable almost CR structure $J$, we prove the existence and uniqueness, modulo high-order error terms and diffeomorphism action, of an approximately Einstein ACH (asymptotically complex hyperbolic) metric $g$ on $M\times (-1,0)$. 

We consider the asymptotic expansion, in powers of a special defining function, of the volume of $M\times (-1,0)$ with respect to $g$ and prove that the log term coefficient is independent of $J$ (and any choice of contact form $\theta$), i.e., is an invariant of the contact structure $H$. 

The approximately Einstein ACH metric $g$ is a generalisation of, and exhibits similar asymptotic boundary behaviour to, Fefferman's approximately Einstein complete K\"ahler metric $g_+$ on strictly pseudoconvex domains. The present work demonstrates that the CR-invariant log term coefficient in the asymptotic volume expansion of $g_+$ is in fact a contact invariant. We discuss some implications this may have for CR $Q$-curvature. 

The formal power series method of finding $g$ is obstructed at finite order. We show that part of this obstruction is given as a one-form on $H^*$. This is a new result peculiar to the partially integrable setting. 
\end{abstract}  

\begin{altabstract}
Pour toute vari\'et\'e lisse compacte $M$ munie d'une structure de contact $H$ et d'une structure presque CR partiellement int\'egrable $J$, nous d\'emontrons l'existence et l'unicit\'e, \`a des termes d'erreur de degr\'e sup\'erieur et action de diff\'eomorphisme pr\`es, d'une m\'etrique presque d'Einstein ACH (asymptotiquement complexe hyperbolique) $g$ sur $M\times (-1, 0)$.

Nous consid\'erons le d\'eveloppement asymptotique, en des puissances d'une fonction d\'efinissante sp\'eciale, du volume de $M\times (-1,0)$ par rapport \`a $g$. Nous d\'emontrons que le coefficient du terme logarithmique est ind\'ependant de $J$ (et du choix de la forme de contact $\theta$) ;  par cons\'equent, c'est un invariant de la structure de contact $H$.

La m\'etrique presque d'Einstein ACH $g$ est une g\'en\'eralisation de la m\'etrique presque d'Einstein k\"ahl\'erienne compl\`ete $g_{+}$ de Fefferman sur les domaines strictement pseudo-convexes. Elle a \'egalement un comportement asymptotique similaire au bord. Le pr\'esent travail d\'emontre que le coefficient du terme logarithmique CR-invariant dans le d\'eveloppement asymptotique du volume de $g_{+}$ est, en fait, un invariant de contact. Nous traitons \'egalement quelques implications possibles pour la $Q$-courbure CR.

La m\'ethode de trouver $g$ par le biais de s\'eries formelles comporte une obstruction d'ordre fini. Nous d\'emontrons que cette obstruction est partiellement donn\'ee par une $1$-forme sur $H^{*}$. Ceci est un r\'esultat nouveau particulier au contexte partiellement int\'egrable.
\end{altabstract}

\maketitle 

\newpage

\numberwithin{equation}{section}  

\section{Introduction}
In a previous paper~\cite{seshadri04}, inspired by Graham~\cite{graham}, we studied volume renormalization for Fefferman's approximately Einstein complete K\"ahler metric on a strictly pseudoconvex domain in a complex manifold. We considered the asymptotic expansion, in powers of a special boundary defining function, for the volume of this domain and showed that the coefficient $L$ of the log term in this expansion is an invariant of the boundary CR structure. In complex dimension two $L$ always vanishes. In higher dimensions, we showed in subsequent work~\cite{seshadri05} that $L$ is moreover invariant under (integrable) deformations of the CR structure. This led us to speculate that $L$ is in fact an invariant of the \emph{contact} structure on the boundary. One of the purposes of the present paper is to show that this is indeed the case.

To make sense of the last statement, in this paper we first generalise $L$ to be defined on an aribitrary smooth compact orientable contact manifold $(M,H)$. We do so by first endowing $(M,H)$ with a partially integrable almost CR structure $J$, a generalisation of an integrable CR structure. (Background material with definitions will follow in the next section.) We then define $L$ in this generalised setting as the log term coefficient in the volume expansion of an approximately Einstein ACH (asymptotically complex hyperbolic) metric $g$ on $X := M\times (-1,0)$. Our definition of ACH is contained is Definition~\ref{df:ach}; suffice for now to say that such a metric exhibits similar boundary asymptotics to those of Fefferman's approximately Einstein complete K\"ahler metric. The special defining function $\phi$ used for the expansion is from Lemma~\ref{lem:special} and corresponds to a choice of contact form $\theta$ for $(M,H)$. Then we have:

\begin{theo}
Define the \emph{Einstein tensor} by $\Ein := \Ric + 2(n+2)g$. Then there exists an ACH metric $g$ on $X$ that solves $\Ein = O(\phi^n)$ with $\Ein(W, Z) = O(\phi^{n+1})$ for $W\in H$ and $Z\in TM$. Moreover if $\gp$ is another such ACH metric then there exists a diffeomorphism $F$ of $\Xb$ that restricts to the identity on $M$ with $\gp = F^\ast g + \phi^n G$, where $G$ is O(1) and $G(W, Z) = O(\phi)$ for $W\in H$ and $Z\in TM$.
\end{theo}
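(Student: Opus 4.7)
The plan is to construct $g$ as a formal power series in the special defining function $\phi$, order by order, following the strategy of Fefferman--Graham for Poincar\'e--Einstein metrics adapted to the ACH (anisotropic) scaling. First I would exploit the diffeomorphism freedom, which fixes $M$ pointwise and acts by pullback on $\Xb$, to reduce $g$ to a canonical normal form near $M = \{\phi = 0\}$. Relative to a splitting of $T\Xb|_M$ into the normal $\partial_\phi$-direction, the Reeb direction of $\theta$, and $H$, one chooses coordinates making $\partial_\phi$ perpendicular to $TM$ with respect to $g$ and fixing the $\phi\phi$-block of $g$ to its model value. This reduces the unknown to a one-parameter family $g(\phi)$ of symmetric $2$-tensors on $M$ adapted to the splitting $TM = H \oplus \R T$.

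Next I would expand the normalized metric as a Taylor series in $\phi$ whose zeroth term $g_0$ is the model complex hyperbolic metric prescribed by $(J, \theta)$, and substitute into $\Ein(g) = 0$. At each order $k$ this produces a tensorial equation of the form $P_k(g_k) = R_k(g_0, \ldots, g_{k-1})$, where $P_k$ is the indicial operator obtained by linearizing the Einstein operator about the model and $R_k$ is a universal expression in the earlier coefficients and their tangential derivatives. Because the model is homogeneous, $P_k$ diagonalizes with respect to the decomposition of symmetric $(0,2)$-tensors on $M$ into components of type $(H,H)$, $(H, T)$, and $(T,T)$, and its eigenvalues on each piece are explicit polynomials in $k$. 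The main claim is that for $0 \leq k < n$ all of these eigenvalues are nonzero, so the recursion uniquely determines $g_k$; at order $k = n$ a kernel appears on the trace-free $(H,H)$-piece, giving the genuine obstruction. The refined bound $\Ein(W,Z) = O(\phi^{n+1})$ comes from the fact that the $(H, \cdot)$-components of $\Ein$ carry an extra factor of $\phi$ in the ACH scaling, combined with the contracted second Bianchi identity, which forces the coefficient of $\phi^n$ in those components to vanish automatically once the pure $(H,H)$-obstruction is present.

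For uniqueness, given a second such ACH metric $\gp$, I would normalize both metrics to the same gauge: a diffeomorphism $F$ of $\Xb$ restricting to the identity on $M$ can be chosen so that $F^\ast g$ satisfies the same normal form as $\gp$. The unique solvability of the recursion through order $n-1$ then forces the Taylor coefficients of $F^\ast g$ and $\gp$ to agree up to $O(\phi^n)$, and the refined agreement on $(H, TM)$-components follows from the same Bianchi argument applied to the difference. Hence $\gp = F^\ast g + \phi^n G$ with $G$ having the stated behaviour. The principal difficulty will be the explicit computation of the indicial operator and the verification of the nonvanishing of its eigenvalues in the anisotropic partially integrable setting; in particular, tracking how the mixed components remain solvable one order longer than the pure $(H,H)$ components, and how partial integrability rather than full integrability of $J$ enters the computation of $R_k$, is the key technical point distinguishing the argument from the classical isotropic Fefferman--Graham construction.
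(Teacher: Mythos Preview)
Your overall strategy---normal form, formal Taylor expansion, indicial analysis, then Bianchi---matches the paper's. But two concrete points in your sketch are wrong for the ACH geometry and would derail the argument as written.

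First, you place the obstruction in the trace-free $(H,H)$-piece, as in the conformal Fefferman--Graham construction. In the ACH setting the opposite occurs: the indicial polynomial governing the $(H,H)$-block is $k^2-(n+1)k-2$, which has no integer roots (since $(n+1)^2+8$ is never a perfect square), so the trace-free $(H,H)$-component is never independently obstructed. The genuine obstructions lie in the $(T,T)$-scalar (the function $s$ in the normal form, with indicial factor $n+2-k$) and in the $(T,H)$-one-form (the cross term $\etat^A$, with indicial factor $(k-(n+1))(k+2)$). The trace of the $(H,H)$-block is coupled to $s$ through the recursion and inherits its obstruction from there; the trace-free part is actually determined one order \emph{higher}. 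This is the key structural difference from the isotropic case and is what produces the paper's obstruction tensors $\BB$ (a scalar) and $\OO_A$ (a one-form on $H^*$), rather than a trace-free symmetric $2$-tensor.

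Second, you invoke the Bianchi identity to obtain the refined bound $\Ein(W,Z)=O(\phi^{n+1})$ on the $(H,\cdot)$-components. In the paper Bianchi is used only for the \emph{normal}-direction components $\Ein_{\infty J}$, which are not solved for at all in the recursion; the higher vanishing of $\Ein_{0A}$ and $\Ein_{AB}$ comes directly from the recursion itself, because their indicial factors stay nonzero one step longer than that of $\Ein_{00}$. If you tried to deduce the extra order on $(H,\cdot)$ from Bianchi, the inputs you would need (namely $\Ein_{\infty J}$) are not yet controlled, and the argument would be circular.
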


There are two key initial steps in the proof of Theorem~\ref{thm:ache}, both reminiscent of similar steps in the integrable CR setting~\cite{seshadri04}. The first is to make a special choice of coframe for $T\Xb$ to allow $g$ to be written in a normal form---see \S\ref{sec:approx}. The second is to write the Levi-Civita connection and curvature of $g$ in terms of local data associated with (the extension to $\Xb$ of) a canonical connection adapted to $(M,H,J,\theta)$. We use a connection introduced by Tanno~\cite{tanno} and call it the TWT connection, since it reduces to the more familiar Tanaka--Webster connection when $J$ is integrable. Details about the TWT connection are in \S\ref{sec:pseudo}. 

The remainder of the proof of Theorem~\ref{thm:ache} uses methods from Graham--Hirachi~\cite{gh}. We solve the Einstein equation iteratively to determine $g$ up to a finite order and then use the contracted Bianchi identity to prove that all the components of $\Ein$ vanish to the correct order---see \S\ref{sec:einstein}.

The main result of this paper is the following:
\begin{mt}
The log term coefficient $L$ in the asymptotic volume expansion of $X$ with respect to an approximately Einstein ACH metric $g$ is an invariant of the contact structure $H$ on $M$.
\end{mt}

Since any contact manifold admits a contractible homotopy class of partially integrable almost CR structures, the proof of the Main Theorem follows from a deformation argument, similar to that in~\cite{gh} and~\cite{seshadri05}---see \S\ref{sec:vol}.

Now when the partially integrable almost CR structure $J$ is integrable, so that $(M,H,J)$ is a CR manifold, the approximately Einstein ACH metric from Theorem~\ref{thm:ache} does in fact coincide (modulo high order error terms and diffeomorphism action) as a Riemannian metric with Fefferman's approximately Einstein complete K\"ahler metric---see \S\ref{sec:integr}. The respective special defining functions also coincide, hence the log term coefficients $L$ coming from the two volume renormalization procedures (i.e., in this paper and~\cite{seshadri04}) agree.

When $M$ has dimension 3, $J$ is automatically integrable. In this dimension $L$ always vanishes (\cite{herzlich},~\cite{seshadri04}). Whether there exist nonzero $L$ in higher dimensions is an open question. Direct calculation using our volume renormalization techniques seems a computationally infeasible task. In the integrable CR setting the fact that $L$ is a constant multiple of the integral of CR $Q$-curvature makes settling the question of its (non)vanishing an even more pertinent task. The contact-invariance of $L$ proved in this paper could be a useful contribution to a solution to this problem. We speculate more on this matter and briefly discuss some other recently disovered contact invariants in the final \S\ref{sec:remarks}.

Some remarks are in order about the literature on ACH metrics. Our definition of ACH is closest to that of Guillarmou--S\'a Barreto~\cite{ga}, which in turn is based on the formalism of so-called $\Theta$-metrics from Epstein--Melrose--Mendoza~\cite{emm}. ACH-like metrics have also been studied by Roth~\cite{roth}, Biquard~\cite{biquard}, Biquard--Herzlich~\cite{bh} and Biquard--Rollin~\cite{br}; these authors also considered Einstein conditions, although with different purposes in mind to ours.

Finally let us make some comments about \emph{ACHE (ACH Einstein) metrics}, by which we mean ACH metrics satisfying $\Ein = O(\phi^m)$, for all $m$. With additional smoothness restraints, the existence of such metrics is in general obstructed by certain tensors. In \S\ref{sec:einstein} we define the obstruction tensors (for $T$ the Reeb field and $W_A$ in the contact direction)
$$
\BB := \phi^{-n}\Ein(T,T)|_M,
$$
$$
\OO_A := \phi^{-(n+1)}\Ein(T,W_A)|_M,
$$
and prove the following result.
\begin{propo}
\emph{(i)} The obstruction tensors $\BB$ (a scalar function) and $\OO_A$ are well-defined independently of the ambiguity in approximately Einstein ACH metric $g$.\\
\emph{(ii)} Under a change in contact form $\that = e^{2\Upsilon}\theta$, the obstruction tensors satisfy
\begin{equation*}
\BBh = e^{-2(n+2)\Upsilon}\BB
\end{equation*}
and
\begin{equation*}
\OOh_A = e^{-2(n+2)\Upsilon}(\OO_A -2i\phi^{-(n+1)}\Ein(\Upsilon^\alpha W_\alpha - \Upsilon^{\bbar}W_{\bbar}, W_A)|_M).
\end{equation*}
\emph{(iii)} If $(M, H, J)$ is such that $\BB$ vanishes then, under a change in contact form $\that = e^{2\Upsilon}\theta$, the obstruction $\OO_A$ satisfies
\begin{equation*}
\OOh_A = e^{-2(n+2)\Upsilon}\OO_A.
\end{equation*}
\end{propo}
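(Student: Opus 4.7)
I would prove the three parts of the proposition in turn. For part (i), I would invoke the iterative construction of \S\ref{sec:einstein}: the Einstein equation, together with the strengthened vanishing $\Ein(H,\cdot) = O(\phi^{n+1})$, determines the Taylor coefficients of $g$ in powers of $\phi$ uniquely at each order strictly below where the ambiguity of Theorem~\ref{thm:ache} enters. Since $\BB$ (the $\phi^n$-coefficient of $\Ein(T,T)$) and $\OO_A$ (the $\phi^{n+1}$-coefficient of $\Ein(T, W_A)$) are polynomial expressions in those canonically determined coefficients, while the ambiguity $\phi^n G$ satisfies $G(W,Z)=O(\phi)$ for $W\in H$ and hence enters only at a strictly higher order in the $(H,\cdot)$ block, both obstructions are well-defined. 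The $F^\ast$ component of the ambiguity is handled by the diffeomorphism equivariance of $\Ein$ together with $F|_M = \id$.

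For part (ii), I would begin by recording the transformations under $\that = e^{2\Upsilon}\theta$: from Lemma~\ref{lem:special}, $\phat = e^{2\Upsilon}\phi + O(\phi^2)$, and solving $\that(\Th) = 1$ with $d\that(\Th, \cdot) = 0$ on $M$ yields
\begin{equation*}
\Th|_M = e^{-2\Upsilon}\bigl(T - 2i(\Upsilon^\alpha W_\alpha - \Upsilon^{\bbar}W_{\bbar})\bigr).
\end{equation*}
By part (i), $\BBh$ and $\OOh_A$ may be computed using the Einstein tensor of the original $g$ after the boundary frame substitution. Expanding $\Ein(\Th, \Th)$ and $\Ein(\Th, W_A)$ bilinearly and invoking $\Ein(W, Z) = O(\phi^{n+1})$ for $W \in H$ from Theorem~\ref{thm:ache}, every term involving $V := -2i(\Upsilon^\alpha W_\alpha - \Upsilon^{\bbar}W_{\bbar})$ drops out of $\BBh$ upon division by $\phat^n$, while in $\OOh_A$ only the single cross-term containing $\Ein(V, W_A)$ survives; combining $e^{-2n\Upsilon}$ from $\phat^{-n}$ with $e^{-4\Upsilon}$ from $\Th\otimes\Th$ (respectively $e^{-2(n+1)\Upsilon}$ from $\phat^{-(n+1)}$ with $e^{-2\Upsilon}$ from $\Th$) yields the common prefactor $e^{-2(n+2)\Upsilon}$.

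For part (iii), I would kill the correction term by means of the contracted second Bianchi identity $\nabla^i \Ein_{ij} = \tfrac{1}{2}\nabla_j \Tr\Ein$. Taking the free index $j$ along $H$ and expanding the covariant divergence in the ACH frame with the TWT Christoffel symbols from \S\ref{sec:pseudo}, I would extract the leading coefficient in $\phi$ to obtain an algebraic identity expressing each component $\phi^{-(n+1)}\Ein(W_\beta, W_A)|_M$ and $\phi^{-(n+1)}\Ein(W_{\bbar}, W_A)|_M$ as a nonzero constant multiple of $\BB$. Since $\Upsilon$ is arbitrary, this suffices: setting $\BB = 0$ forces the entire correction $-2i\phi^{-(n+1)}\Ein(\Upsilon^\alpha W_\alpha - \Upsilon^{\bbar}W_{\bbar}, W_A)|_M$ to vanish, leaving $\OOh_A = e^{-2(n+2)\Upsilon}\OO_A$.

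The principal obstacle is the Bianchi-identity calculation in part (iii): one must carefully organize the covariant divergence in the ACH frame, track the leading $\phi$-orders of the terms produced by the TWT Christoffel symbols, and isolate the precise combination that yields the correction appearing in (ii). Part (ii) amounts to routine bookkeeping once the frame transformations are pinned down, and part (i) follows cleanly from the iterative construction that produced the obstructions in the first place.
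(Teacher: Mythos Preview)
Your outlines for parts (i) and (ii) match the paper's proof: for (i) the paper substitutes $s' = s + \phi^{n+2}\kappa$, $\hhat'_{AB} = \hhat_{AB} + \phi^{n+2}\lambda_{AB}$, $\etat'^A = \etat^A + \phi^{n+1}\mu^A$ directly into the Einstein--equation recursions and reads off $\BB' = \BB$, $\OO'_A = \OO_A$; for (ii) it records $\Th = e^{-2\Upsilon}(T - 2i\Upsilon^\alpha W_\alpha + 2i\Upsilon^{\bbar}W_{\bbar})$ and $\phat = e^{2\Upsilon}\phi + O(\phi^2)$ and expands bilinearly, exactly as you describe.

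Part (iii), however, contains a genuine gap. Your plan is to extract from the contracted Bianchi identity an \emph{algebraic} relation of the form $\phi^{-(n+1)}\Ein(W_B, W_A)|_M = c\,\BB\cdot h_{BA}$. Such a relation cannot hold for an arbitrary approximately Einstein ACH metric: by part~(i) the right-hand side is independent of the ambiguity in $g$, whereas the left-hand side is not. Indeed, if one perturbs $g$ within the allowed ambiguity by $s' = s + \phi^{n+2}\kappa$ and $\hhat'_{AB} = \hhat_{AB} + \phi^{n+2}\lambda_{AB}$, a direct substitution into the formula for $\phi\Ein_{AB}$ gives
\[
\phi^{-(n+1)}\Ein'_{AB}\big|_M - \phi^{-(n+1)}\Ein_{AB}\big|_M \;=\; -2n\lambda_{AB} + (n+2)(h^{CD}\lambda_{CD})h_{AB} + \tfrac{n}{4}\kappa\,h_{AB},
\]
which is nonzero for generic $\lambda_{AB}$, $\kappa$. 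The Bianchi identity with free index along $H$ at that order relates $\phi^{-(n+1)}\Ein_{AB}|_M$ to $\BB$ \emph{and} to the uncontrolled quantity $\phi^{-n}\Ein_{\infty A}|_M$ (which is itself ambiguity-dependent), so you do not get the clean algebraic statement you need.

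The paper instead turns the ambiguity to its advantage. Since $\OO_A$ is $g$-independent by part~(i), it suffices to exhibit \emph{one} approximately Einstein ACH metric $g'$ for which the correction term in the transformation law vanishes identically. Starting from any $g$, the paper modifies the $\phi^{n+2}$ Taylor coefficients by choosing $h^{AB}\lambda_{AB}|_M$ so that $h^{AB}\Ein'_{AB} = O(\phi^{n+2})$; since the trace-free part of $\Ein_{AB}$ at order $\phi^{n+1}$ is already determined to vanish by the iterative construction, this forces $\Ein'_{AB} = O(\phi^{n+2})$. (The hypothesis $\BB = 0$ is what guarantees that the recursion for $\NNT^{n+2}s|_M$ is consistent with freely prescribing $\kappa$.) For this $g'$ the correction term in the formula from part~(ii) is manifestly zero, hence $\OOh_A = e^{-2(n+2)\Upsilon}\OO_A$.
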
 
That there exists a secondary obstruction $\OO_A$ given as a one-form in $H^\ast$ is a novel feature of this partially integrable setting, since in the integrable case it is well-known (\cite{fef76},~\cite{lm},~\cite{graham87}) that the only obstruction to appear is a scalar function. Studying further the obstruction tensors and in particular their relation with the (almost) CR deformation complex should be interesting (cf.~\cite{gp} in the setting of conformal geometry). 

On the other hand, with no additional smoothness restraints, we expect the general question of existence of ACHE metrics to be settled by introducing log terms in the expansion for the metric. The formal theory for ACHE metrics closely resembles that of the ambient metric in conformal geometry, whose existence is proved by Fefferman--Graham~\cite{fg07}.

\textit{Notations and conventions.} Lowercase Greek indices run $1,\dots,n$. Uppercase Latin indices in $\{A, B,\dots,I\}$ run $1,\dots,n,\oneb,\dots,\nb$ while uppercase Latin indices in $\{J, K,\dots,Z\}$ run $\infty, 0, 1,\dots,n,\oneb,\dots,\nb$. The letter  $i$ will denote the quantity $\sqrt{-1}$. We observe the summation convention. Smooth for us means infinitely differentiable.

\textit{Acknowledgements.} This work was part of my PhD thesis at the University of Tokyo. I am most grateful to my supervisor Prof.~Kengo Hirachi for his expert guidance. I also thank Prof.~Robin Graham for helpful discussions and for his hospitality during my visit to the University of Washington in the summer of 2006. This project was commenced during my stay at the National Center for Theoretical Sciences at National Tsing-Hua University, Taiwan, in the summer of 2005; I would like to thank that institution for its hospitality and the organisers of the NCTS Mini-course \& Workshop ``Conformal Invariants---Geometric and Analytic Aspects'' for the invitation to attend. I am grateful for the financial support of a Japanese Government (MEXT) Scholarship for research students.

\section{Contact manifolds and ACH metrics in normal form}
\label{sec:approx}

\subsection{Contact manifolds}

Let $M$ be a smooth compact orientable manifold of dimension $2n+1$, endowed with a \emph{contact structure} $H$. That is, the smooth hyperplane distribution $H\subset TM$ is given as the kernel of a globally-defined nonvanishing one-form $\theta$ that satisfies the condition of maximal nonintegrability $\theta\wedge(d\theta)^n\neq 0$. If $\theta$ is such a \emph{contact form} then so is any smooth positive multiple of $\theta$.

The \emph{Reeb vector field} $T$ is characterised by the conditions $\theta(T) = 1$ and $T\hook d\theta = 0$.

Let $J\in\End(H)$ be a \emph{partially integrable almost CR structure}. That is, $J^2 = -1$ and the \emph{Levi metric} $2d\theta(\cdot,J\cdot)$ is Hermitian positive definite. (Such a $J$ is sometimes called a \emph{calibrated} or \emph{compatible} almost complex structure.) Note that the second condition is independent of choice of contact form, as is the conformal class of Levi metrics, denoted $[h]$, associated to $(H, J)$. Since $H$ is a symplectic vector bundle, there exists a contractible homotopy class of such $J$. 

\subsection{ACH metrics}

Consider the manifold-with-boundary 
$$
\Xb := M\times(-1, 0] \ni (x,\rho).
$$
Extend $H,J,\theta$ and $T$ to $\Xb$ by extending trivially in the $\rho$-direction. Throughout this paper $O(\rho^k)$ will denote quantities on $X$ that, when divided by $\rho^k$, extend at least continuously to $M = M\times\{0\}$. 

\begin{df}
\label{df:ach}
A smooth Riemannian metric $g\in S^2TX$ is said to be \emph{ACH (asymptotically complex hyperbolic)} if it satisfies:
\begin{enumerate}
\item $\rho g|_H = O(1)$ and on $M$, $\rho g|_H \in [h]$;
\item $\rho^2g = O(1)$ and $\rho^2 g|_M$ is a smooth multiple of $\theta^2|_M$;
\item $|(d\rho)/2\rho|_g^2 = O(1)$ and $|(d\rho)/2\rho|_g^2|_M = 1$;
\item $g(T, W) = O(1)$, for any $W\in H$;
\item $g^{-1}(\mu, \nu) = O(\rho)$, for any one-forms $\mu, \nu$ on $\Xb$;
\item $g^{-1}(d\rho, \mu) = O(\rho^2)$, for any one-form $\mu$ on $\Xb$.
\end{enumerate}
\end{df}

Note that our definition of an ACH metric depends on a choice of $J$ and $\theta$. Conditions (1), (2) and (3) in the definition are analogous to those used for asymptotically real hyperbolic metrics, see, e.g.,~\cite{fg01}. The motivation behind the remaining conditions will be apparent below.

\subsection{Normal form for ACH metrics}
\label{subsec:normal}
For the volume renormalization procedure it is convenient to work with $g$ in a normal form. As familiar from~\cite{graham}, ~\cite{seshadri04} and~\cite{ga}, this involves the choice of a special defining function for the boundary.

\begin{lem}
\label{lem:special}
There exists a unique defining function $\phi$ for $M$ in $\Xb$ such that
\begin{equation}
\label{eq:special}
\phi^2 g|_M = 4\theta^2|_M;\quad \left|\frac{d(\log(-\phi))}{2}\right|_g^2 = 1.
\end{equation}
\end{lem}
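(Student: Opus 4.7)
The plan is to write $\phi = e^{\Upsilon}\rho$ for an unknown smooth function $\Upsilon$ on $\Xb$; then $\phi$ is automatically a defining function and the lemma reduces to finding a unique smooth $\Upsilon$ whose values on $M$ and whose behaviour in the interior are pinned down, respectively, by the two conditions of (\ref{eq:special}).

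First I would use ACH conditions (1), (2) and (4) to verify that, as a section of $S^2 T^\ast\Xb|_M$, the tensor $\rho^2 g|_M$ has only its $T\otimes T$-component nonzero; thus $\rho^2 g|_M = c\,\theta^2|_M$ with $c := (\rho^2 g)(T,T)|_M$ a smooth positive function on $M$. Since $\phi^2 g|_M = e^{2\Upsilon|_M} c\,\theta^2|_M$, the first condition of (\ref{eq:special}) is equivalent to $\Upsilon|_M = \tfrac12\log(4/c)$, which uniquely determines the boundary value of $\Upsilon$.

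The second condition rewrites as $|d\phi|_g^2 = 4\phi^2$, and substituting $\phi = e^{\Upsilon}\rho$ and expanding gives
\[
|d\rho|_g^2 + 2\rho\,g^{-1}(d\rho, d\Upsilon) + \rho^2|d\Upsilon|_g^2 = 4\rho^2.
\]
Both sides vanish to order $\rho^2$ at $M$, so the equation is formally characteristic; the refined decay properties built into Definition~\ref{df:ach} are exactly what is needed to factor the $\rho^2$ out cleanly. Indeed, ACH (3) gives $|d\rho|_g^2 = 4\rho^2(1 + \rho b)$ with $b\in C^\infty(\Xb)$; ACH (6) produces a smooth vector field $V := \rho^{-2}g^{\sharp}(d\rho)$ on $\Xb$ satisfying $V(\rho)|_M = 4$, so $V$ is transverse to $M$; and ACH (5)--(6) together show $|d\Upsilon|_g^2 = \rho\,P(x, d\Upsilon)$ for a smooth function $P$ on $T^\ast\Xb$ that is quadratic on the fibres. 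Dividing the display by $\rho^2$ and substituting turns the equation into the equivalent noncharacteristic quasi-linear PDE
\[
V(\Upsilon) + \tfrac12 P(x, d\Upsilon) + 2b = 0 \quad\text{on } \Xb.
\]
Classical first-order PDE theory (method of characteristics) then yields a unique smooth solution with the prescribed boundary value in a collar neighbourhood of $M$, which extends to all of $\Xb$ by continuation along the characteristic flow (shrinking $(-1,0)$ if necessary).

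The main obstacle is precisely the passage from the formally characteristic equation $|d\phi|_g^2 = 4\phi^2$ to a regular noncharacteristic PDE; this pivots on ACH (5) and (6), which are built into Definition~\ref{df:ach} exactly so that the overall factor $\rho^2$ can be extracted from both sides without introducing singularities at $M$. Once the reformulation is in hand, the remaining existence and uniqueness analysis is routine.
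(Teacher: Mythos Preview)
Your proposal is correct and follows essentially the same route as the paper: write $\phi = e^{\Upsilon}\rho$ (the paper uses $e^{2f}$, a cosmetic difference), fix the boundary value from condition~(2), expand the second equation in~(\ref{eq:special}), and use conditions~(3), (5), (6) of Definition~\ref{df:ach} to divide out the apparent degeneracy at $\rho=0$ and obtain a noncharacteristic first-order PDE solved by standard theory. Your presentation is slightly more invariant (introducing the transverse field $V=\rho^{-2}g^{\sharp}(d\rho)$ rather than working in product coordinates $(x,\rho)$), but the substance is identical.
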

\begin{proof}
Write $\phi = e^{2f}\phicy$ for a function $f$ to be determined. The boundary value of $f$ is determined from the first condition in (\ref{eq:special}) above and assumption (2) in Definition~\ref{df:ach}. Next 
$$ 
1 = \left|\frac{d(\log(-\phi))}{2}\right|_g^2 = \frac{1}{4\rho^2}(|d\rho|_g^2 + 4\rho \langle d\rho, df \rangle_g + 4\rho^2|df|_g^2)
$$
is true if and only if
\begin{equation*}
\begin{split}
\frac{\partial f}{\partial p} &+\frac{1}{|d\rho|^2}\langle d\rho, dx \rangle_g\frac{\partial f}{\partial x} 
+ \frac{2\rho}{|d\rho|^2}\frac{\partial f}{\partial x}\frac{\partial f}{\partial \rho}\langle d\rho, dx \rangle_g\\
& +\rho\left(\frac{\partial f}{\partial\rho}\right)^2+ \frac{\rho}{|d\rho|^2}\left(\frac{\partial f}{\partial x}\right)^2|dx|^2 = \frac{\rho}{|d\rho|^2}\left(1 - \left|\frac{d\rho}{2\rho}\right|^2\right).
\end{split}
\end{equation*}
By assumptions (3), (5) and (6) in Definition~\ref{df:ach}, this is a noncharacteristic PDE with a unique solution $f$ near $M$.
\end{proof}

Define the vector field $\NNT$ as the dual of $d\phi/4\phi^2$, so that $\NNT := g(d\phi, \cdot)/4\phi^2$. Lemma~\ref{lem:special} shows that $d\phi(\NNT) = 1$, thus $\NNT$ is transverse to $M$. 

Define a diffeomorphism of $\Xb$ by mapping a point $(x,\rho)$ to the point in $\Xb$ obtained by following the unit-speed integral curve of $\NNT$ emanating from $x$ for time $\rho$. In the sequel, we shall omit from our notation pullbacks or pushforwards under this diffeomorphism.

Henceforth will shall work exclusively with the complexification of $H$, and abuse notation by denoting this as $H$ as well. The partially integrable almost CR structure $J$ extends to this complexified version, whence $H = H^{1,0}\oplus H^{0,1}$ splits into $i, -i$ eigenspaces. We shall work with the ACH metric $g$ naturally extended in the complexified bundle $H$. For local computations we let $\{W_A\}$ be a local frame for $H$.

\begin{lem} 
There exists a unique vector field $\TTT$ on $X$ near $M$ such that
$$
\TTT\perp_g\HH; \quad d\phi(\TTT) = 0;\quad \theta(\TTT) = 1.
$$
\end{lem}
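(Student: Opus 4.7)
The plan is to exploit the splitting $TX = \mathbb{R}\NNT \oplus \ker d\phi$ furnished by the special defining function, together with the diffeomorphism identification of Lemma~\ref{lem:special} (under which pullbacks are suppressed). In these coordinates $\NNT = \partial/\partial\phi$, and $T$, $H$ and $\theta$ are the trivial $\phi$-extensions of their boundary data; in particular $d\phi(T) = 0$ and $d\phi|_H = 0$, so $\ker d\phi = \mathbb{C}T \oplus H$. Consequently any vector field of the form
\begin{equation*}
\TTT = T + V, \qquad V \in \Gamma(H),
\end{equation*}
automatically satisfies $d\phi(\TTT) = 0$ and $\theta(\TTT) = \theta(T) + \theta(V) = 1$.

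It remains to pin down $V$ by imposing the orthogonality requirement $g(\TTT, W) = 0$ for all $W \in H$, which amounts to solving
\begin{equation*}
g(V, W) = -g(T, W), \qquad W \in H.
\end{equation*}
In the local frame $\{W_A\}$ for (complexified) $H$ this is a linear system of $2n$ equations in the $2n$ coefficients of $V$, whose coefficient matrix is the Gram matrix $g(W_A, W_B)$ of $g|_H$.

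To close the argument I would verify that this Gram matrix is invertible near $M$. By condition (1) of Definition~\ref{df:ach}, $\phi g|_H$ is $O(1)$ and restricts on $M$ to a positive-definite real symmetric form in the conformal class $[h]$; its $\mathbb{C}$-bilinear extension to $H_\mathbb{C}$ is therefore nondegenerate, and nondegeneracy persists in a neighborhood of $M$ by continuity. Hence the linear system has a unique solution $V \in \Gamma(H)$, and we obtain $\TTT$ uniquely on a collar of $M$.

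The only real work is the nondegeneracy of $g|_H$ on $H_\mathbb{C}$ and the verification that the trivial-extension identities $d\phi(T) = 0$, $d\phi|_H = 0$, $\theta(T) = 1$ survive the identification of Lemma~\ref{lem:special}; both are immediate from the normal form setup. I do not anticipate any genuine obstacle, and the construction is purely algebraic once $\phi$ and the identification have been fixed.
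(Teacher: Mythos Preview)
Your argument is correct and is essentially the paper's own proof: both write $\TTT = T + V$ with $V\in\Gamma(H)$ and solve the linear system $g(V,W_A) = -g(T,W_A)$ using the nondegeneracy of $g|_H$ furnished by condition (1) of Definition~\ref{df:ach}. If anything you are slightly more explicit than the paper in justifying why the ansatz $\TTT = T + V$ already exhausts all candidates (via $\ker d\phi = \mathbb{C}T\oplus H$ and $\theta(T)=1$, $\theta|_H=0$), which simultaneously handles uniqueness.
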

\begin{proof}
Suppose $\TTT$ and $\TTT^\prime$ are two such vector fields. Then $\TTT - \TTT^\prime \in \ker\theta = \HH\oplus \NNT$, implying that $\TTT = \TTT^\prime$, and proving uniqueness.

For existence, set $\TTT = \TT - a^AW_A$ for functions $a^A$ to be determined by $g(\TTT, W_A) = 0$, i.e., $g(\TTT, W_A) = g(W_A, W_B)a^B$. The matrix on the right-hand side of this equation is nonsingular; this is by assumption (1) in Definition~\ref{df:ach}. Thus we may solve for the functions $a^B$, proving local, and hence global, existence of $\TTT$.
\end{proof} 

\begin{lem}
\label{lem:TM}
The vector field $\TTT$ extends to $M$ and $\TTT|_M = \TT|_M$.
\end{lem}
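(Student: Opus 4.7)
The plan is to show that the coefficients $a^A$ in the expression $\TTT = T - a^A W_A$ from the previous lemma vanish on $M$, so that $\TTT$ extends continuously (in fact, with the regularity permitted by the ACH definition) to $M$ with boundary value equal to $T|_M$.

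Recall that $a^B$ was determined by the linear system
\[
g(T, W_A) = g(W_A, W_B)\, a^B.
\]
Write $h_{AB} := g(W_A, W_B)$ and let $h^{AB}$ denote the inverse matrix, so that $a^B = h^{BA} g(T, W_A)$. The two ingredients I need are size estimates on $h^{BA}$ and on $g(T, W_A)$ as $\phi \to 0$. Since $\phi = e^{2f}\rho$ with $f$ smooth up to $M$, the symbols $O(\rho^k)$ and $O(\phi^k)$ are interchangeable near the boundary, and I will use $O(\phi)$ throughout.

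By condition (1) of Definition~\ref{df:ach}, the matrix $\phi\, h_{AB}$ extends continuously to $M$ and its boundary value lies in the conformal class $[h]$ of positive definite Levi metrics. In particular $\phi\, h_{AB}$ is uniformly invertible near $M$, so $h^{BA} = \phi \cdot (\phi h)^{-1\,BA} = O(\phi)$. By condition (4), $g(T, W_A) = O(1)$. Combining these gives
\[
a^B = h^{BA}\, g(T, W_A) = O(\phi),
\]
so each $a^B$ vanishes on $M$. Since $W_A$ is a smooth local frame on $\Xb$ (the distribution $H$ having been extended trivially in the $\rho$-direction), the vector field $a^A W_A$ extends continuously to $M$ with zero boundary value. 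Therefore $\TTT = T - a^A W_A$ extends to $M$ with $\TTT|_M = T|_M$, which is exactly the claim. No step here is really an obstacle; the content is simply combining the defining equation of $\TTT$ with the boundary growth/decay rates built into the ACH definition.
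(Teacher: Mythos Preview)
Your proof is correct and follows essentially the same approach as the paper: combine condition~(4), which gives $g(T,W_A)=O(1)$, with condition~(1), which makes $\phi\, g|_H$ nondegenerate at $M$ so that $(g|_H)^{-1}=O(\phi)$, to conclude $a^A=O(\phi)$. The paper argues componentwise by pairing with $W_{\bbar}$ (respectively $W_\beta$) to extract $a^\alpha$ (respectively $b^{\bbar}$), whereas you invert the full $2n\times 2n$ system at once; your version is slightly cleaner since it avoids having to separately control the off-type entries $g(W_{\gbar},W_{\bbar})$.
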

\begin{proof}
Locally write $\TTT = \TT - a^\alpha W_\alpha - b^{\bbar} W_{\bbar}$. Then
$$
0 = g(\TTT, W_{\bbar}) = g(\TT - a^\alpha W_\alpha - b^{\bbar} W_{\bbar}, W_{\bbar}) = O(1) - a^\alpha O(\phi^{-1}) - O(1),
$$
by assumptions (4) and (1) in Definition~\ref{df:ach}. We conclude that $a^\alpha = O(\phi)$. Similarly $b^{\bbar} = O(\phi)$.
\end{proof}

Lemma~\ref{lem:TM} shows that there are functions $\{\etat^A\}$, continuous up to $M$, such that
$$
\TTT = \TT - \phi\etat^A W_A.
$$
We shall henceforth work with the local frame $\{\NNT, \TTT, W_A\}$ for $T\Xb$ (near $M$). Let 
$\{d\phi, \theta, \thetat^A\}$ be the dual coframe, for some one-forms $\{\thetat^A\}$ that annihilate $\NNT, \TTT$. We specify them as follows: take an \emph{admissible coframe} for $H^{1,0}$, i.e., $(1,0)$-forms $\{\theta^\alpha\}$ satisfying $\theta^\alpha(W_\beta) = \delta_\beta^\alpha, \theta^\alpha(T) = 0$; set $\theta^{\abar} := \overline{\theta^\alpha}$; finally take
$$
\thetat^A := \theta^A + \phi\etat^A\theta.
$$
The reader is warned that the vector field $\TTT$ is \emph{not} purely real, so that in general
$$
\etat^{\abar}\neq\overline{\etat^\alpha}\textnormal{ and }\thetat^{\abar}\neq \overline{\thetat^\alpha}.
$$ 

Write 
\begin{equation}
\label{eq:levi}
d\theta = ih_{\alpha\bbar}\theta^\alpha\wedge\theta^{\bbar},
\end{equation}
for a positive definite Hermitian matrix $h_{\alpha\bbar}$. This Levi metric will be used to raise and lower indices.

With respect to our frame, the ACH metric $g$ has the normal form
\begin{equation}
\label{eq:diag}
g =  \left(\begin{array}{ccc} (2\phi)^{-2} & 0 & 0 \\ 0 & \phi^{-2}s & 0 \\ 0 & 0 & -\phi^{-1}\hhat_{AB} \end{array}\right), 
\end{equation}
for a function $s$ and matrix of functions $\hhat_{AB}$. By Lemma~\ref{lem:special}, $s|_M\equiv 4$. We moreover declare that
\begin{equation}
\label{eq:hhatM}
\hhat_{\alpha\bbar}|_M = h_{\alpha\bbar},
\end{equation}
with the other components of $\hhat$ vanishing on $M$; this is consistent with assumption (1) in Defintion~\ref{df:ach}. 

\section{Almost pseudohermitian geometry and the TWT connection}
\label{sec:pseudo}

Let $M$ be a smooth orientable manifold of dimension $2n+1$, endowed with a contact structure $H$, a partially integrable almost CR structure $J$, and a choice of contact form $\theta$. Then we call the quadruple $(M, H, J, \theta)$ an \emph{almost pseudohermitian manifold}. The terminology is inspired by Webster~\cite{webster}: when $J$ is integrable, i.e., satisfies $[H^{1,0},H^{1,0}]\subset H^{1,0}$, he called such objects pseudohermitian manifolds. 

Associated to any almost pseudohermitian manifold is a canonical connection introduced by Tanno~\cite{tanno}. It reduces to the Tanaka--Webster connection (\cite{tanaka},~\cite{webster}) in the integrable setting. We shall refer to it as the \emph{TWT connection}\footnote{This connection is sometimes called the ``generalised Tanaka--Webster connection'' in the literature. We prefer our terminology since there are other canonical connections that generalise Tanaka--Webster's; see \S\ref{sec:einstein} and~\cite{nicolaescu}.}. The reader may consult~\cite[Proposition 3.1]{tanno} for its axiomatic definition. Tanno chose to preserve Tanaka's torsion condition~\cite[Proposition 3.1(iii)]{tanno} from the integrable setting. The price paid is compatibility with $J$: unlike the Tanaka--Webster connection in the integrable case, the TWT connection does \emph{not} preserve the partially integrable almost CR structure. The extent to which $J$ is not preserved is measured by the \emph{Tanno tensor}, see Proposition~\ref{prop:twt} below, whose vanishing characterises integrable almost CR structures (\cite[Proposition 2.1]{tanno}).

Blair--Dragomir~\cite{bd} have further studied the TWT connection and its curvature and obtained many local formulae.

\begin{prop}[{Tanno~\cite[\S 6]{tanno}, Blair--Dragomir~\cite[\S 2.1]{bd}}]
\label{prop:twt}
\textnormal{(i)} Let the connection forms $\omega_{A}^{\phantom{A}B}$ of the TWT connection $\nabla$ be defined by
$$
\nabla W_{\alpha} = \omega_{\alpha}^{\phantom{\alpha}\beta}\otimes W_{\beta} + \omega_{\alpha}^{\phantom{\alpha}\bbar}\otimes W_{\bbar}; \quad\nabla T = 0. 
$$
Then the following structure equations are satisfied:
$$
d\theta^{\beta} = \theta^{\alpha}\wedge\omega_{\al}^{\phantom{\al}\beta} +  \theta^{\abar}\wedge\omega_{\abar} ^{\phantom{\abar}\beta} + A^{\beta}_{\phantom{\beta}\abar}\theta\wedge\theta ^{\overline{\al}}; \quad  \omega_{\al\bbar} + \omega_{\bbar\al} = dh_{\al\bbar}\:,
$$
where the torsion tensor $A$ satisfies
$$
A_{\alpha\beta} = A_{\beta\alpha}.
$$
\textnormal{(ii)} Let $Q(Y,X) = (\nabla_XJ)Y$, for $X, Y\in \C TM$ denote the \emph{Tanno tensor}. (Here $J$ is extended to all of $\C TM$ by declaring $JT = 0$). Then 
$$
Q^{\bbar}_{\phantom{\bbar}\alpha\gamma} := \theta^{\bbar}(Q(W_\alpha, W_\gamma)) = 2i\omega_{\al}^{\phantom{\al}\bbar}(W_\gamma),
$$
$$
Q^{\beta}_{\phantom{\beta}\abar\gbar} := \theta^{\beta}(Q(W_{\abar}, W_{\gbar})) = -2i\omega_{\abar}^{\phantom{\abar}\beta}(W_{\gbar}),
$$
and all other components of $Q$ vanish. Furthermore,
$$
\omega_{\al}^{\phantom{\al}\bbar}(W_{\gbar}) =  \omega_{\al}^{\phantom{\al}\bbar}(T)
= \omega_{\abar}^{\phantom{\abar}\beta}(W_\gamma) =  \omega_{\abar}^{\phantom{\abar}\beta}(T) = 0.
$$
\end{prop}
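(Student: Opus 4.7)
The plan is to derive both parts directly from the axiomatic characterisation of the TWT connection given by Tanno~\cite[Proposition~3.1]{tanno}. The relevant axioms are $\nabla\theta=0$, $\nabla T=0$, compatibility with the Webster metric, the torsion conditions $\Tor(X,Y)=2d\theta(X,Y)T$ for $X,Y\in H$ and $\Tor(T,JY)+J\Tor(T,Y)=0$ for $Y\in TM$, together with the requirement that the Tanno tensor $Q(Y,X):=(\nabla_X J)Y$ vanish whenever either argument is $T$ or when $X,Y$ lie in opposite bigrade components of $H$. Each of the listed identities will then be a one- or two-line consequence.

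For~(i), because $\nabla\theta=0$ and $\nabla T=0$ kill the cross-connection forms involving the $T$-direction, Cartan's first structure equation reduces to
\begin{equation*}
d\theta^\beta = \theta^\alpha\wedge\omega_\alpha^{\phantom{\alpha}\beta} + \theta^{\abar}\wedge\omega_{\abar}^{\phantom{\abar}\beta} + \Tor^\beta.
\end{equation*}
The partial integrability expression~(\ref{eq:levi}) gives $d\theta(W_\gamma,W_\delta)=0$, hence $\Tor(W_\gamma,W_\delta)=0$; meanwhile $\Tor(W_\gamma,W_{\delb})=2ih_{\gamma\delb}T$ is annihilated by $\theta^\beta$. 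Feeding $Y=W_\gamma$ into $\Tor(T,JY)=-J\Tor(T,Y)$ together with $JW_\gamma=iW_\gamma$ and $JT=0$ shows $\Tor(T,W_\gamma)\in H^{0,1}$, say $A^{\bbar}_{\phantom{\bbar}\gamma}W_{\bbar}$; conjugation gives $\Tor(T,W_{\gbar})=A^\beta_{\phantom{\beta}\gbar}W_\beta$. Substituting produces $\Tor^\beta=A^\beta_{\phantom{\beta}\abar}\theta\wedge\theta^{\abar}$. The symmetry $A_{\alpha\beta}=A_{\beta\alpha}$ comes from Tanno's additional symmetry requirement on the $T$-torsion with respect to the Webster metric, and the identity $\omega_{\alpha\bbar}+\omega_{\bbar\alpha}=dh_{\alpha\bbar}$ is just Webster compatibility $\nabla h=0$ written in the frame $\{W_A\}$.

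For~(ii), a direct computation using $JW_\alpha=iW_\alpha$ yields
\begin{equation*}
(\nabla_X J)W_\alpha = i\nabla_X W_\alpha - J\nabla_X W_\alpha = 2i\omega_\alpha^{\phantom{\alpha}\bbar}(X)W_{\bbar},
\end{equation*}
so evaluating at $X=W_\gamma$ gives $Q^{\bbar}_{\phantom{\bbar}\alpha\gamma}=2i\omega_\alpha^{\phantom{\alpha}\bbar}(W_\gamma)$, and conjugation gives the identity for $Q^\beta_{\phantom{\beta}\abar\gbar}$. The mixed-bigrade axiom $Q(W_\alpha,W_{\gbar})=0$ and the $T$-vanishing $Q(W_\alpha,T)=0$ (the latter being $(\nabla_T J)W_\alpha=0$, coherent with $\nabla T=0$) translate through the same formula into $\omega_\alpha^{\phantom{\alpha}\bbar}(W_{\gbar})=0$ and $\omega_\alpha^{\phantom{\alpha}\bbar}(T)=0$; the two remaining vanishings follow by complex conjugation.

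The main obstacle is purely bookkeeping---keeping track of sign and index conventions in the torsion axiom and in Tanno's symmetrisation requirement for $A$---for which the explicit local identities in Blair--Dragomir~\cite{bd} provide a useful check. No analytic input is required at any stage; everything is algebraic on the fibres of $TM$.
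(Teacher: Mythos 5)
The paper does not prove this proposition at all: it is imported verbatim from Tanno~\cite[\S 6]{tanno} and Blair--Dragomir~\cite[\S 2.1]{bd}, so there is no internal argument to measure yours against. Taken on its own terms, your sketch is correct wherever it is genuinely a computation: the reduction of Cartan's first structure equation using $\nabla\theta=0$ and $\nabla T=0$; the observation that partial integrability kills $\theta^\beta(\Tor(W_\gamma,W_\delta))$ while $\Tor(W_\gamma,W_{\delb})$ is proportional to $T$ and hence annihilated by $\theta^\beta$; the deduction $\Tor(T,W_\gamma)\in H^{0,1}$ from $\Tor(T,JY)=-J\Tor(T,Y)$; and the identity $(\nabla_XJ)W_\alpha=2i\,\omega_\alpha^{\phantom{\alpha}\bbar}(X)W_{\bbar}$. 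These give the structure equation in (i) and the two displayed component formulae in (ii).

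The gap is in the remaining clauses of (ii). You include among your ``axioms'' the requirement that $Q$ vanish whenever either argument is $T$ or when the arguments lie in opposite bigrades. That is not one of Tanno's axioms: his fifth condition prescribes $(\nabla_XJ)Y$ by an explicit formula in terms of the Levi-Civita connection of the associated Webster metric, and the vanishing of the mixed-bigrade and $T$-components of $Q$ has to be extracted from that formula (this is what Blair--Dragomir do). Since your own identity shows $Q(W_\alpha,X)=2i\,\omega_\alpha^{\phantom{\alpha}\bbar}(X)W_{\bbar}$, the statements $Q(W_\alpha,W_{\gbar})=0$ and $Q(W_\alpha,T)=0$ are \emph{literally equivalent} to the final identities $\omega_\alpha^{\phantom{\alpha}\bbar}(W_{\gbar})=0$ and $\omega_\alpha^{\phantom{\alpha}\bbar}(T)=0$; assuming the former and then ``deriving'' the latter is circular. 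Nor are these quantities forced by your axioms (i)--(iv) alone: evaluating the conjugate structure equation on $(T,W_\gamma)$ only pins down the combination $A^{\bbar}_{\phantom{\bbar}\gamma}-\omega_\gamma^{\phantom{\gamma}\bbar}(T)$, so some form of the fifth axiom is indispensable both for uniqueness of the connection and for these vanishings. (Only $Q(T,\cdot)=\nabla(JT)-J\nabla T=0$ is genuinely free.) A similar, milder, remark applies to $A_{\alpha\beta}=A_{\beta\alpha}$, which you attribute to an ``additional symmetry requirement'' of Tanno's; in the standard treatments it is derived, e.g.\ from $d^2\theta=0$ together with the structure equations. To close the argument you would either have to quote Tanno's actual fifth axiom and carry out the component computation, or simply cite~\cite{bd} for these clauses, as the paper in effect does.
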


\begin{prop}[{Blair--Dragomir~\cite[Theorem 3]{bd}}]
\label{prop:twtcurv}
The curvature forms 
$$
\Omega_{\al}^{\phantom{\al}\beta} =  d\omega_{\al}^{\phantom{\al}\beta} - \omega_{\al}^{\phantom{\al}\gamma}\wedge \omega_{\gamma}^{\phantom{\gamma}\beta} -
\omega_{\al}^{\phantom{\al}\gbar}\wedge \omega_{\gbar}^{\phantom{\gbar}\beta}
$$
and
$$
\Omega_{\al}^{\phantom{\al}\bbar} =  d\omega_{\al}^{\phantom{\al}\bbar} -
\omega_{\al}^{\phantom{\al}\gamma}\wedge \omega_{\gamma}^{\phantom{\gamma}\bbar} - \omega_{\al}^{\phantom{\al}\gbar}\wedge \omega_{\gbar}^{\phantom{\gbar}\bbar} 
$$
of the TWT connection are given by
\begin{eqnarray*}  
\Omega_{\al}^{\phantom{\al}\beta} & = & R^{\phantom{\al}\beta}_{\al\phantom{\beta}\rho\overline{\gamma}}\theta^\rho\wedge\theta^ {\overline{\gamma}} + iA^\beta_{\phantom{\beta}\gbar}\theta_\alpha\wedge\theta^{\gbar} - iA_{\alpha\gamma}\theta^\gamma\wedge\theta^\beta\\ &&+ (A_{\alpha\gamma ,}^{\phantom{\alpha\gamma ,}\beta} + \frac{i}{2}Q_{\gamma\mu\alpha}A^{\mu\beta})\theta^\gamma\wedge\theta - (A^{\beta}_{\phantom{\beta}\overline{\gamma},\:\alpha} - \frac{i}{2}Q_{\gbar\mbar}^{\phantom{\gbar\mbar}\beta}
A^{\mbar}_{\phantom{\mbar}\alpha})\theta ^{\gbar} \wedge\theta\nonumber\\ 
&&-\frac{i}{4}Q_{\lambda\mu\alpha ,}^{\phantom{\lambda\mu\alpha ,}\beta}\theta^\lambda\wedge\theta^\mu - \frac{i}{4}Q_{\lbar\mbar\phantom{\beta} ,\:\alpha}^{\phantom{\lbar\mbar}\beta}\theta^{\lbar}\wedge\theta^{\mbar} \nonumber 
\end{eqnarray*} 
and
\begin{eqnarray*}  
\Omega_{\al}^{\phantom{\al}\bbar} & = &
(A_{\gamma\alpha\:,}^{\phantom{\gamma\alpha\:,}\bbar} -A_{\gamma\phantom{\bbar} ,\:\alpha}^{\phantom{\gamma}\bbar}) \theta^{\gamma}\wedge\theta 
+ \frac{i}{2}A_{\gbar\mbar}(Q^{\mbar\phantom{\alpha}\bbar}_{\phantom{\mbar}\alpha} - Q^{\mbar\bbar}_{\phantom{\mbar\bbar}\alpha})\theta^{\gbar}\wedge\theta\\
&&+\frac{i}{2}Q^{\bbar}_{\phantom{\bbar}\alpha\lambda ,\: \gamma}\theta^\lambda\wedge\theta^\gamma 
- \frac{i}{2}Q^{\bbar}_{\phantom{\bbar}\alpha\gamma ,\:\lbar}\theta^{\lbar}\wedge\theta^\gamma, 
\end{eqnarray*}
where a comma as a subscript indicates covariant differentiation with respect to the TWT connection.
\end{prop}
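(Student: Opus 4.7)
The plan is to derive both formulas directly from the first Bianchi identity $d^2\theta^\beta = 0$, combined with the Hermitian compatibility $\omega_{\al\bbar}+\omega_{\bbar\al}=dh_{\al\bbar}$ and the type restrictions recorded in Proposition~\ref{prop:twt}(ii). The key preliminary observation is that those type restrictions force the non-$J$-preserving connection forms to be semibasic with explicit Tanno-tensor coefficients:
\[
\omega_\al^{\phantom{\al}\bbar} \;=\; -\tfrac{i}{2}\,Q^{\bbar}_{\phantom{\bbar}\al\gamma}\,\theta^\gamma,\qquad \omega_{\abar}^{\phantom{\abar}\beta} \;=\; \tfrac{i}{2}\,Q^{\beta}_{\phantom{\beta}\abar\gbar}\,\theta^{\gbar}.
\]
This closed-form expression is what reduces the entire calculation to exterior algebra.

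For $\Omega_\al^{\phantom{\al}\beta}$ I would apply $d$ to the torsion structure equation, eliminate any reappearance of $d\theta^\gamma,d\theta^{\gbar},d\theta$ using the structure equations (together with the Levi-form identity $d\theta=ih_{\al\bbar}\theta^\al\wedge\theta^{\bbar}$), and then add and subtract the quadratic connection terms $\omega_\al^{\phantom{\al}\gamma}\wedge\omega_\gamma^{\phantom{\gamma}\beta}$ and $\omega_\al^{\phantom{\al}\gbar}\wedge\omega_{\gbar}^{\phantom{\gbar}\beta}$ so that $\theta^\al\wedge\Omega_\al^{\phantom{\al}\beta}+\theta^{\abar}\wedge\Omega_{\abar}^{\phantom{\abar}\beta}$ appears on the left. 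Decomposing the right-hand side in the semibasic basis $\{\theta^\rho\wedge\theta^{\gbar},\theta^\lambda\wedge\theta^\mu,\theta^{\lbar}\wedge\theta^{\mbar},\theta^\gamma\wedge\theta,\theta^{\gbar}\wedge\theta\}$ then yields the claimed formula: the $\theta^\rho\wedge\theta^{\gbar}$ coefficient serves as the definition of $R^{\phantom{\al}\beta}_{\al\phantom{\beta}\rho\gbar}$; the $\theta^\gamma\wedge\theta$ and $\theta^{\gbar}\wedge\theta$ lines pick up TWT-covariant derivatives $A_{\al\gamma,}^{\phantom{\al\gamma,}\beta}$ and $A^\beta_{\phantom{\beta}\gbar,\al}$ plus cross terms $\tfrac{i}{2}Q_{\gamma\mu\al}A^{\mu\beta}$ and $\tfrac{i}{2}Q^{\bbar}_{\phantom{\bbar}\gbar\mbar}A^{\mbar}_{\phantom{\mbar}\al}$ produced when the semibasic $\omega$-twist above is inserted; the pure $\theta^\lambda\wedge\theta^\mu$ and $\theta^{\lbar}\wedge\theta^{\mbar}$ coefficients come entirely from differentiating the $Q$-twist and supply the $-\tfrac{i}{4}$ covariant-derivative-of-$Q$ terms; and the $iA^\beta_{\phantom{\beta}\gbar}\theta_\al\wedge\theta^{\gbar}$, $-iA_{\al\gamma}\theta^\gamma\wedge\theta^\beta$ pieces arise from substituting $d\theta=ih_{\al\bbar}\theta^\al\wedge\theta^{\bbar}$ into $d(A^\beta_{\phantom{\beta}\abar}\theta\wedge\theta^{\abar})$ and lowering one index with $h_{\al\bbar}$.

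For $\Omega_\al^{\phantom{\al}\bbar}$ the slickest route is to apply $d$ directly to the closed-form expression $\omega_\al^{\phantom{\al}\bbar}=-\tfrac{i}{2}Q^{\bbar}_{\phantom{\bbar}\al\gamma}\theta^\gamma$. Using the structure equation for $d\theta^\gamma$, the torsion contribution lands on the $\theta^{\gbar}\wedge\theta$ line, where the symmetry $A_{\al\beta}=A_{\beta\al}$ lets one repackage the coefficient as $\tfrac{i}{2}A_{\gbar\mbar}(Q^{\mbar\phantom{\al}\bbar}_{\phantom{\mbar}\al}-Q^{\mbar\bbar}_{\phantom{\mbar\bbar}\al})$. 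Meanwhile $dQ^{\bbar}_{\phantom{\bbar}\al\gamma}$ combined with the quadratic connection pieces $\omega_\al^{\phantom{\al}\delta}\wedge\omega_\delta^{\phantom{\delta}\bbar}$ assembles into the TWT-covariant derivatives $\tfrac{i}{2}Q^{\bbar}_{\phantom{\bbar}\al\lambda,\gamma}$ and $-\tfrac{i}{2}Q^{\bbar}_{\phantom{\bbar}\al\gamma,\lbar}$ on the mixed pure-type lines. Finally, the $\theta^\gamma\wedge\theta$ contribution consolidates, after again invoking $A_{\al\beta}=A_{\beta\al}$ and the skew-Hermitian relation, into the compact form $A_{\gamma\al,}^{\phantom{\gamma\al,}\bbar}-A^{\bbar}_{\phantom{\bbar}\gamma,\al}$.

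The main obstacle is purely bookkeeping: one must track which semibasic $2$-form each term lands in, keep straight which index is being raised or lowered with $h_{\al\bbar}$, and invoke the vanishing statements of Proposition~\ref{prop:twt}(ii) at every substitution in order to prune zero contributions and convert ordinary derivatives into TWT-covariant ones. No conceptual input beyond the first Bianchi identity and the explicit semibasic form of $\omega_\al^{\phantom{\al}\bbar}$ is required.
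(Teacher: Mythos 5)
Your outline is sound and, importantly, it is doing something the paper itself does not: the paper's ``proof'' of this proposition is essentially a citation of Blair--Dragomir~\cite[Theorem 3]{bd}, supplemented only by the remark that the explicit $\theta^\gamma\wedge\theta$ and $\theta^{\gbar}\wedge\theta$ coefficients must be extracted from the recipe given there. What you propose is a self-contained derivation along the lines of what Blair--Dragomir themselves carry out. Your key preliminary observation --- that Proposition~\ref{prop:twt}(ii) forces $\omega_\al^{\phantom{\al}\bbar}=-\tfrac{i}{2}Q^{\bbar}_{\phantom{\bbar}\al\gamma}\theta^\gamma$ and $\omega_{\abar}^{\phantom{\abar}\beta}=\tfrac{i}{2}Q^{\beta}_{\phantom{\beta}\abar\gbar}\theta^{\gbar}$ --- is correct (it follows directly from the displayed identities and vanishing statements in that proposition) and is indeed what makes the whole computation tractable, particularly the direct attack on $\Omega_\al^{\phantom{\al}\bbar}$.

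Two steps are thinner than your write-up suggests. First, for $\Omega_\al^{\phantom{\al}\beta}$: the identity $d^2\theta^\beta=0$ only yields $\theta^\al\wedge\Omega_\al^{\phantom{\al}\beta}+\theta^{\abar}\wedge\Omega_{\abar}^{\phantom{\abar}\beta}+(\textnormal{torsion terms})=0$, which constrains the curvature forms but does not determine them individually. To read off coefficients you must also differentiate $\omega_{\al\bbar}+\omega_{\bbar\al}=dh_{\al\bbar}$ to get $\Omega_{\al\bbar}+\Omega_{\bbar\al}=0$ and then run a Cartan-lemma argument on the semibasic decomposition; you list the Hermitian compatibility among your ingredients, so this is recoverable, but it is where the symmetries of $R$ and the absence of unwanted components actually come from and should be made explicit. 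Second, on the $\theta^\gamma\wedge\theta$ line of $\Omega_\al^{\phantom{\al}\bbar}$: differentiating the semibasic expression produces the Reeb derivative $Q^{\bbar}_{\phantom{\bbar}\al\gamma,\:0}$ as that coefficient, and converting it into $A_{\gamma\alpha\:,}^{\phantom{\gamma\alpha\:,}\bbar}-A_{\gamma\phantom{\bbar},\:\al}^{\phantom{\gamma}\bbar}$ is not a consequence of $A_{\al\beta}=A_{\beta\al}$ and index-lowering alone; it requires a genuine Bianchi-type identity relating $\nabla_TQ$ to covariant derivatives of the torsion, obtained from the $\theta\wedge\theta^\gamma$ component of $d^2\theta^{\bbar}=0$. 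Neither point is fatal --- both are supplied by the machinery you already invoke --- but as written the proposal under-reports where the real work sits.
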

\begin{proof}
This is given in~\cite{bd}, with the exception of explicit formulae for the $\theta^\gamma\wedge\theta$ and $\theta^{\gbar}\wedge\theta$ terms. However a recipe for the calculation of these terms is given in the statement of~\cite[Theorem 3]{bd} and from this it is straightforward to obtain the formulae above.
\end{proof}

\section{Einstein equation and Bianchi identity}
\label{sec:einstein}

Let us now return to the ACH setup of \S~\ref{sec:approx}. Extend the TWT connection of $(M,H,J,\theta)$ to $\Xb$ by first trivially extending it in the $\NNT$-direction. Introduce the notation that $\infty$ as an index will denote the $\NNT$-direction and $0$ as an index will denote the $\TTT$-direction. Also in the interests of simplicity of notation we shall mainly work with indices $A, B,$ etc., avoiding $\alpha, \beta, \abar, \bbar,$ etc.~where possible. We need to keep in mind that certain components of some tensors will formally vanish. For instance, $h_{AB}$ vanishes unless $A = \alpha$ and $B = \bbar$ or vice versa. As another example, note that the formal quantity $A_B^{\phantom{B}B}$ (here $A$ is the torsion tensor from Proposition~\ref{prop:twt} and we have raised an index) also vanishes.

\begin{prop}
\label{prop:levicivita} 
Define a modified Kronecker symbol $\epsilon_A^B$ by
\begin{eqnarray*}
\epsilon_\alpha^B &:= &\delta_\alpha^B\\
\epsilon_{\abar}^B &:= &-\delta_{\abar}^B.
\end{eqnarray*}
Write the (extended) TWT connection forms as
$$ 
\omega_A^{\phantom{A}B} = \omega_{A\phantom{B}0}^{\phantom{A}B}\theta +
\omega_{A\phantom{B}C}^{\phantom{A}B}\theta^C.
$$
\nopagebreak
Then the Levi-Civita connection matrix $\psi_J^{\phantom{J}K}$ of $g$ with respect to the coframe $\{d\phi, \theta, \thetat^A\}$ satisfies
\begin{eqnarray*}
\psi_ \infty^{\phantom{\infty} \infty} & = &(-\phi^{-1})d\phi;\\
\psi_\infty^{\phantom{\infty}0} & =  &(-\phi^{-1} + \tfrac{1}{2}s^{-1}\NNT s)\theta\\ 
&& -\tfrac{1}{2}\phi s^{-1}\hhat_{AB}(\etat^B + \phi\NNT\etat^B)\thetat^A;\\ 
\psi_\infty^{\phantom{\infty}A} & =  &\tfrac{1}{2}(\etat^A + \phi\NNT\etat^A)\theta\\
&& + (\tfrac{1}{2}g^{AB}\NNT g_{BC})\thetat^{C};\\
\psi_0^{\phantom{0}\infty} & =  &(4\phi^{-1}s -2\NNT s)\theta\\ 
&& + 2\phi(\hhat_{AB}(\etat^B + \phi\NNT\etat^B))\thetat^A;\\  
\psi_ 0^{\phantom{ 0} 0} & = &(-\phi^{-1} + \tfrac{1}{2}s^{-1}\NNT s)d\phi\\
&& + (\tfrac{1}{2}s^{-1}\TTT s)\theta\\
&& + (\tfrac{1}{2}s^{-1}W_A s)\thetat^A;\\
\psi_0^{\phantom{0}A} & =  &-\tfrac{1}{2}(\etat^A + \phi\NNT\etat^A)d\phi\\
&& +s\phi^{-1}\hhatinv^{AB}(\tfrac{1}{2}s^{-1}W_Bs -i\epsilon_B^{C}\phi\etat_C)\theta\\
&& +\tfrac{1}{2}\hhatinv^{AB}\big(\TTT\hhat_{BC} + \hhat_{BD}(-\omega_{C\phantom{D}0}^{\phantom{C}D} 
+ A^D_{\phantom{D}C}- \phi W_{C}\etat^D\\
&&\quad\quad\quad + \phi\etat^E\omega_{C\phantom{D}E}^{\phantom{C}D} - \phi\etat^E\omega_{E\phantom{D}C}^{\phantom{E}D} + i\phi^2\epsilon^{F}_{C}\etat^D\etat_F)\\
&&\quad\quad + \hhat_{CD}(-\omega_{B\phantom{D}0}^{\phantom{B}D} 
+ A^D_{\phantom{D}B}- \phi W_{B}\etat^D\\
&&\quad\quad\quad + \phi\etat^E\omega_{B\phantom{D}E}^{\phantom{B}D} - \phi\etat^E\omega_{E\phantom{D}B}^{\phantom{E}D} + i\phi^2\epsilon^{F}_{B}\etat^D\etat_F)\\
&&\quad\quad - is\phi^{-1}\epsilon^D_C h_{BD}\big)\thetat^C;\\
\psi_A^{\phantom{A}\infty} & =  &2\phi\hhat_{AB}(\etat^B + \phi\NNT \etat^B)\theta\\
&& - 2\phi^2\NNT g_{AC}\thetat^{C};\\
\psi_A^{\phantom{A}0} & =  &-\tfrac{1}{2}\phi s^{-1}\hhat_{AB}(\etat^B + \phi\NNT\etat^B)d\phi\\
&& +(\tfrac{1}{2}s^{-1}W_As - i\phi\epsilon_A^B\etat_B)\theta\\
&& +\phi s^{-1}\hhat_{AB}\psi_{0\phantom{B}C}^{\phantom{0}B}\thetat^C;\\
\psi_A^{\phantom{A}B} & =  &\tfrac{1}{2}\phi \hhatinv^{BC}\NNT (\phi^{-1}\hhat_{AC})d\phi\\
&& + (\psi_{0\phantom{B}A}^{\phantom{0}B} +\omega_{A\phantom{B}0}^{\phantom{A}B} 
- A^B_{\phantom{B}A}+ \phi W_{A}\etat^B - \phi\etat^C\omega_{A\phantom{B}C}^{\phantom{A}B} + \phi\etat^C\omega_{C\phantom{B}A}^{\phantom{C}B}\\
&&\quad\quad - i\phi^2\epsilon^{C}_{A}\etat^B\etat_C)\theta\\
&& \tfrac{1}{2}\hhatinv^{BD}\big(W_C\hhat_{AD} - W_D\hhat_{AC} - W_A\hhat_{CD}
- \hhat_{AE}(\omega_{D\phantom{E}C}^{\phantom{D}E} - \omega_{C\phantom{E}D}^{\phantom{C}E})\\ 
&&\quad - \hhat_{CE}(\omega_{D\phantom{E}A}^{\phantom{D}E} - \omega_{A\phantom{E}D}^{\phantom{A}E}) 
- \hhat_{DE}(\omega_{C\phantom{E}A}^{\phantom{C}E} - \omega_{A\phantom{E}C}^{\phantom{A}E}) \big)\thetat^C.
\end{eqnarray*} 
\end{prop}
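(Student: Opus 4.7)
The strategy is to determine $\psi_J^{\phantom{J}K}$ as the unique solution of Cartan's first structure equation
\[
d\theta^K + \psi_J^{\phantom{J}K}\wedge\theta^J = 0
\]
together with metric compatibility
\[
dg_{JK} = g_{LK}\psi_J^{\phantom{J}L} + g_{JL}\psi_K^{\phantom{K}L},
\]
where $\{\theta^J\} = \{d\phi,\theta,\thetat^A\}$ and $g_{JK}$ is in the block-diagonal form~(\ref{eq:diag}).

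The first step is to compute the exterior derivatives of the coframe elements in the $\thetat$-basis. Clearly $d(d\phi) = 0$. For $d\theta$, one substitutes $\theta^A = \thetat^A - \phi\etat^A\theta$ into the Levi form~(\ref{eq:levi}), obtaining $ih_{\alpha\bbar}\thetat^\alpha\wedge\thetat^{\bbar}$ plus explicit $\phi\etat$-corrections supported on wedges with $\theta$. For $d\thetat^A = d\theta^A + d(\phi\etat^A)\wedge\theta + \phi\etat^A\,d\theta$, I would apply the TWT structure equation of Proposition~\ref{prop:twt}(i) to expand $d\theta^A$ in terms of the TWT connection forms $\omega_B^{\phantom{B}C}$ and the torsion tensor, then re-express every $\theta^B$ via $\theta^B = \thetat^B - \phi\etat^B\theta$ so that the result sits entirely in the $\{d\phi,\theta,\thetat^A\}$ coframe.

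Next I would use metric compatibility with the diagonal $g$ to halve the number of unknowns. On-diagonal indices ($J = L$ on the same block) give directly the $d\phi$-, $\theta$- and $\thetat$-coefficients of $\psi_\infty^{\phantom{\infty}\infty}$, $\psi_0^{\phantom{0}0}$ and the symmetric part of $\psi_A^{\phantom{A}B}$ in terms of $\NNT$-, $\TTT$- and $W_C$-derivatives of the entries of $g$, i.e.\ of the explicit $\phi$-factors together with $s$ and $\hhat_{AB}$. Off-diagonal pairs $(J,L)$ instead force algebraic identifications $g_{KL}\psi_J^{\phantom{J}K} = -g_{JK}\psi_L^{\phantom{L}K}$, relating $\psi_\infty^{\phantom{\infty}0}\leftrightarrow\psi_0^{\phantom{0}\infty}$, $\psi_\infty^{\phantom{\infty}A}\leftrightarrow\psi_A^{\phantom{A}\infty}$ and $\psi_0^{\phantom{0}A}\leftrightarrow\psi_A^{\phantom{A}0}$ up to the relevant $\phi$-powers. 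The residual unknowns are then pinned down by substituting the ansatz $\psi_J^{\phantom{J}K} = a_J^{\phantom{J}K}\,d\phi + b_J^{\phantom{J}K}\,\theta + c_{JC}^{\phantom{JC}K}\,\thetat^C$ into each of the three torsion equations and matching coefficients on the two-form basis $\{d\phi\wedge\theta,\,d\phi\wedge\thetat^A,\,\theta\wedge\thetat^A,\,\thetat^A\wedge\thetat^B\}$. The TWT splitting $\omega_A^{\phantom{A}B} = \omega_{A\phantom{B}0}^{\phantom{A}B}\theta + \omega_{A\phantom{B}C}^{\phantom{A}B}\theta^C$ from the statement is what produces the $\omega$-terms in each formula.

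The main obstacle is not conceptual but bookkeeping. Each connection form decomposes as the sum of three types of contribution: a purely radial ($\NNT$-)piece coming from differentiating the $\phi$-factors in $g$; a tangential piece built from TWT data that would survive in the integrable case $\etat^A\equiv 0$; and a family of $\phi\etat^A$-twist corrections arising from the replacements $\theta^A = \thetat^A - \phi\etat^A\theta$, further complicated by the non-reality $\etat^{\abar}\neq\overline{\etat^\alpha}$ and the quadratic term $i\phi^2\epsilon^F_C\etat^D\etat_F$ appearing in the statement. I would organise the computation by first verifying the formulas in the integrable limit $\etat^A\equiv 0$ (where the $\omega$- and $A$-terms take their familiar Tanaka--Webster form), and then reintroducing the $\etat$-corrections block by block by re-running the structure equations with the full $\thetat^A$. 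Direct identification of coefficients then yields the stated formulas one component at a time.
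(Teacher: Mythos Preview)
Your proposal is correct and follows essentially the same approach as the paper: solve the torsion-free structure equation $d\thetat^J = \thetat^K\wedge\psi_K^{\phantom{K}J}$ together with metric compatibility $\psi_{JK}+\psi_{KJ}=dg_{JK}$, using the TWT structure equations of Proposition~\ref{prop:twt} and the Levi form~(\ref{eq:levi}) to expand $d\thetat^A$. The paper simply states this and omits the computation; your additional remarks about organising the bookkeeping (first setting $\etat^A\equiv 0$, then reintroducing the twist corrections) are a sensible elaboration of the same method.
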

\begin{proof}
Set $\thetat^\infty := d\phi$ and $\thetat^0 := \theta$ and then simultaneously solve the structure equations
$$
d\thetat^J = \thetat^K\wedge \psi_K^{\phantom{K}J}
$$
and
$$
\psi_{JK} + \psi_{KJ} = dg_{JK},
$$
using Proposition~\ref{prop:twt} and (\ref{eq:levi}). The calculation is straightforward but long so we omit it.
\end{proof}

We introduce notation to keep track of derivatives in the $\NNT$-direction. Let $\WW^{(k)}$ generically denote a known $O(1)$ tensor on $X$ that contains at most $k$ iterated derivatives in the $\NNT$-direction. For example, $\WW_{AB}^{(2)}$ could denote the tensor $\NNT^2\hhat_{AB}$. In general such a tensor will involve tensorial invariants comprised of (Levi metric contractions of) TWT curvature, torsion and Tanno tensors ($R$, $A$ and $Q$ in Proposition~\ref{prop:twtcurv}) and their TWT covariant derivatives.

Now we present a result that may be used to simplify many tensorial calculations. In particular we use it when computing the Ricci tensor of $g$, modulo certain $\WW^{(k)}$ tensors, in Proposition~\ref{prop:ric} below. Moreover the result would be a useful computational tool if one wanted to calculate explicity the $\WW^{(k)}$ tensors.

\begin{lem}
\label{lem:frame}
Near any point $x\in X$, there exists a frame $\{W_A\} = \{W_\alpha, W_{\abar}\}$ for $H$ with respect to which the part of the TWT connection matrix given by $\omega_\beta^{\phantom{\beta}\alpha}$ vanishes at $x$. 
\end{lem}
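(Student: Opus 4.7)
The plan is to exploit gauge freedom in the frame for $H^{1,0}$. By Proposition~\ref{prop:twt}, the forms $\omega_\alpha^{\phantom{\alpha}\bbar}$ are tensorial in character (they encode the Tanno tensor $Q$), whereas only the block $\omega_\beta^{\phantom{\beta}\alpha}$ behaves as an honest connection one-form, namely as the connection matrix for the induced connection on the rank-$n$ complex bundle $H^{1,0}$. This is the standard situation in which one can gauge-transform the connection matrix to vanish at any prescribed single point.

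Starting from an arbitrary local frame $\{W_\alpha\}$ for $H^{1,0}$ near $x$, I would consider a new frame $W'_\alpha = M_\alpha^{\phantom{\alpha}\beta} W_\beta$, where $M$ is $GL(n,\C)$-valued, completed to a frame on all of $H$ by the conjugate prescription $W'_{\abar} = \overline{M_\alpha^{\phantom{\alpha}\beta}}\, W_{\bbar}$ on $H^{0,1}$. Expanding $\nabla W'_\alpha$ using Proposition~\ref{prop:twt} and rewriting the result in the new frame, a direct (but routine) computation gives the transformation rule
$$
\omega'{}_\alpha^{\phantom{\alpha}\beta} = (dM_\alpha^{\phantom{\alpha}\gamma})(M^{-1})_\gamma^{\phantom{\gamma}\beta} + M_\alpha^{\phantom{\alpha}\gamma}\,\omega_\gamma^{\phantom{\gamma}\delta}\,(M^{-1})_\delta^{\phantom{\delta}\beta},
$$
so imposing $M(x) = I$ reduces the value at $x$ to $\omega'{}_\alpha^{\phantom{\alpha}\beta}(x) = dM_\alpha^{\phantom{\alpha}\beta}(x) + \omega_\alpha^{\phantom{\alpha}\beta}(x)$.

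It then remains to arrange $dM_\alpha^{\phantom{\alpha}\beta}(x) = -\omega_\alpha^{\phantom{\alpha}\beta}(x)$. Fixing any local coordinates $\{y^J\}$ centred at $x$ and expanding $\omega_\alpha^{\phantom{\alpha}\beta}|_x = a_{\alpha J}^{\phantom{\alpha J}\beta}\, dy^J|_x$, the linear Ansatz
$$
M_\alpha^{\phantom{\alpha}\beta}(y) = \delta_\alpha^\beta - y^J a_{\alpha J}^{\phantom{\alpha J}\beta}
$$
does the job; it clearly satisfies $M(x) = I$, and because $GL(n,\C)$ is open inside its Lie algebra at the identity, $M$ remains invertible in a neighbourhood of $x$. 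No step poses real difficulty; the only point requiring care is the verification of the affine transformation law for the $(1,0)$-to-$(1,0)$ block under frame changes respecting the $J$-splitting, which rests precisely on the fact that the off-diagonal piece $\omega_\alpha^{\phantom{\alpha}\bbar}$ is tensorial. I would note in passing that the same procedure cannot kill $\omega_\alpha^{\phantom{\alpha}\bbar}$, which is as expected since that block encodes a pointwise invariant of the almost CR structure.
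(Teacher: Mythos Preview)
Your argument is correct: the block $\omega_\beta^{\phantom{\beta}\alpha}$ is the connection matrix of a genuine linear connection on the rank-$n$ bundle $H^{1,0}$, and the standard affine gauge-change with $M(x)=I$ and $dM(x)=-\omega(x)$ kills it at the point. The only delicate step---that under a $GL(n,\C)$-valued change of frame respecting the $J$-splitting the off-diagonal block $\omega_\alpha^{\phantom{\alpha}\bbar}$ transforms tensorially and hence does not interfere---you have identified correctly, and Proposition~\ref{prop:twt} supplies exactly what is needed.

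The paper proceeds differently. Rather than a purely algebraic gauge transformation, it introduces an auxiliary connection $\nablat_XY := \tfrac{1}{2}(\nabla_XY - J\nabla_XJY)$, checks that $\nablat J=0$ so that $\nablat$-parallel transport preserves the type decomposition of $H$, and observes that $\omegat_\alpha^{\phantom{\alpha}\beta}=\omega_\alpha^{\phantom{\alpha}\beta}$. Parallel translating an initial frame along $\nablat$-geodesics then produces a frame of the required type in which the full $\nablat$-connection matrix---and hence $\omega_\beta^{\phantom{\beta}\alpha}$---vanishes at $x$. Your approach is more elementary and self-contained; the paper's has the side benefit of exhibiting $\nablat$ as an alternative extension of the Tanaka--Webster connection to the partially integrable setting (the analogue of Lichnerowicz's first canonical connection in almost Hermitian geometry), which the authors remark on separately.
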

\begin{proof}
Start by choosing any frame $\{W_A^x\}$ for $H$ at $x$. If we extend this frame to a neighbourhood of $x$ by TWT-parallel translation along the geodesics of the TWT connection $\nabla$, then a standard result from the theory of linear connections ensures that the resulting frame is smooth near $x$ and the (full) TWT connection matrix with respect to this frame vanishes at $x$. The problem is that, since the TWT connection does not preserve the almost CR structure $J$, the resulting frame is not of the form $\{W_\alpha, W_{\abar}\}$ (types are not preserved), meaning we cannot subsequently use the local almost pseudohermitian formulae from \S\ref{sec:pseudo}. We get around this problem by instead parallel translating the initial frame along geodesics with respect to a new connection $\nablat$, defined by
$$
\nablat_XY = \frac{1}{2}\left(\nabla_XY - J\nabla_XJY\right).
$$
One checks that $\nablat J = 0$ so that frame types are preserved under $\nablat$-parallel translation. Moreover, writing $\nablat W_{\alpha} = \omegat_{\alpha}^{\phantom{\alpha}\beta}\otimes W_{\beta}$, one checks that $\omegat_{\alpha}^{\phantom{\alpha}\beta} = \omega_{\alpha}^{\phantom{\alpha}\beta}$. This proves the lemma. 
\end{proof}

\begin{rem}
The connection $\nablat$ clearly reduces to $\nabla$ when $J$ is integrable, i.e., when $\nabla$ is the usual Tanaka--Webster connection preserving $J$. Thus $\nablat$ provides an alternative extension of the Tanaka--Webster connection to almost pseudohermitian manifolds. Of course Tanaka's torsion condition will no longer hold in general for $\nablat$. If one considers an analogy between almost pseudohermitian and almost Hermitian manifolds, then the TWT connection could be thought of as corresponding to the Levi-Civita connection and $\nablat$ to the first canonical connection of Lichnerowicz (see, e.g.,~\cite{ffs}, \cite{gauduchon}).
\end{rem}

If we work in a special frame given by Lemma~\ref{lem:frame}, then we can delete any TWT connection forms of type $\omega_\beta^{\phantom{\beta}\alpha}$ and replace ordinary derivatives with TWT covariant derivatives, plus Tanno tensor correction terms. This is because the remaining part $\omega_{\bbar}^{\phantom{\bbar}\alpha}$ of the TWT connection matrix may be identified with the Tanno tensor $Q$---see Proposition~\ref{prop:twt}(ii). An upshot of this is that, for example, we can delete from calculations terms of the form $\omega_{A\phantom{B}B}^{\phantom{A}B}$, as the reader may verify by recalling from Proposition~\ref{prop:twt}(ii) the form of the Tanno tensor.

\begin{prop}
\label{prop:ric}
The components of the Ricci tensor of $g$ satisfy
\begin{eqnarray*}
\Ric_{\infty\infty} &=& -\frac{1}{2}\left((n+2)\phi^{-2} - (s\phi)^{-1}\NNT s + s^{-1}\NNT^2s + 
\hhatinv^{AB}\NNT^2\hhat_{AB}\right)\\
&&\quad + \WW^{(1)};\\
\Ric_{\infty 0} &=& \WW^{(1)};\\
\Ric_{\infty A} &=& \WW_A^{(1)};\\
\Ric_{00} &=& -2(n+2)\phi^{-2}s + 2(n+1)\phi^{-1}\NNT s -2\NNT^2 s +\WW^{(1)};\\
\Ric_{0A} &=& -4(n+1)\hhat_{AB}\etat^B - 2(n-2)\phi\hhat_{AB}\NNT\etat^B + 2\phi^2\hhat_{AB}\NNT^2\etat^B\\
&&\quad + \WW_A^{(0)};\\
\Ric_{AB} &=& 2(n+1)\phi^{-1}\hhat_{AB} + s(2\phi)^{-1}\hhatinv^{CD}h_{AC}h_{DB}- 2n\NNT\hhat_{AB}\\
&&\quad - \hhatinv^{CD}(\NNT\hhat_{CD})\hhat_{AB} - s^{-1}(\NNT s)\hhat_{AB} + 2\phi\NNT^2\hhat_{AB}\\
&&\quad\quad + \WW_{AB}^{(0)} + \phi\WW_{AB}^{(1)}.
\end{eqnarray*}
Furthermore the terms containing $\etat$ in $\WW_A^{(0)}, \WW_{AB}^{(0)}$, and $\NNT\eta$ in $\WW^{(1)},\WW_A^{(1)}, \WW_{AB}^{(1)}$, are $O(\phi)$.
\end{prop}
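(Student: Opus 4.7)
The plan is to compute the Riemann curvature forms $\Psi_J^{\phantom{J}K} = d\psi_J^{\phantom{J}K} - \psi_J^{\phantom{J}L}\wedge\psi_L^{\phantom{L}K}$ entry by entry from the Levi-Civita connection forms of Proposition~\ref{prop:levicivita}, then take the relevant trace to extract $\Ric_{JL}$, keeping only the contributions that survive modulo the bookkeeping tensors $\WW^{(k)}$. Throughout I would work at an arbitrary point in a frame furnished by Lemma~\ref{lem:frame}, which kills all TWT connection entries of the form $\omega_\beta^{\phantom{\beta}\alpha}$ at that point and allows the surviving $\omega_{\abar}^{\phantom{\abar}\beta}$ pieces to be identified with the Tanno tensor $Q$. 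The effect is to prune the long expressions of Proposition~\ref{prop:levicivita} down to the genuinely relevant factors of $\phi^{-1}$, $s$, $\NNT s$, $\NNT\hhat_{AB}$, $\NNT\etat^A$ and the Levi metric $h_{AB}$.

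With that simplification the dominant pieces of $\Psi_J^{\phantom{J}K}$ come from three mechanisms: exterior differentiation of the $\phi^{-1}$ coefficients against $d\phi$, which produces $\phi^{-2}$ terms; exterior differentiation of $\NNT s$, $\NNT\hhat_{AB}$, $\NNT\etat^A$ against $d\phi$, which produces the second-order $\NNT$-derivatives that appear in the stated formulas; and wedge products of $\phi^{-1}$-leading parts with $\NNT$-derivative parts, which produce the mixed $\phi^{-1}\NNT$ terms. Everything else is $O(1)$ with at most one iterated $\NNT$-derivative and so is absorbed into $\WW^{(0)}$ or $\WW^{(1)}$ according to type. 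Taking the $K=\infty$, $K=0$ and $K=B$ contractions of $R^K_{\phantom{K}JKL}$ against the diagonal metric (\ref{eq:diag}), whose inverse has leading parts $4\phi^2$ in the $\infty\infty$ slot, $\phi^2 s^{-1}$ in the $00$ slot and $-\phi\hhatinv^{AB}$ in the contact slot, then delivers the six displayed components.

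The numerical coefficients $n+2$, $2(n+1)$, $4(n+1)$, $2(n-2)$ and $2n$ all arise from combining the trace identity $\hhatinv^{AB}h_{AB}|_M = 2n$ with the explicit $\phi$-power differentiations and with counting how many entries of the connection matrix contribute a given pattern; the special torsion-like term $s(2\phi)^{-1}\hhatinv^{CD}h_{AC}h_{DB}$ in $\Ric_{AB}$ is traceable to the $-is\phi^{-1}\epsilon^D_C h_{BD}$ piece of $\psi_0^{\phantom{0}A}$ paired with the matching contribution from $\psi_A^{\phantom{A}0}$. For the final $\phi$-order assertion, one inspects Proposition~\ref{prop:levicivita} directly: every occurrence of $\etat^A$ in $\psi$ carries at least one explicit factor of $\phi$, and every occurrence of $\NNT\etat^A$ carries at least one factor of $\phi^2$; a single differentiation by $d\phi$ or a multiplication against a $\phi^{-1}$ factor from another entry still leaves at least one $\phi$ on any $\etat$- or $\NNT\etat$-term that is not already captured by the explicit leading coefficients in the formulas.

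The main obstacle is purely organisational: the raw expansion of $d\psi-\psi\wedge\psi$ from the matrix of Proposition~\ref{prop:levicivita} produces a very large number of terms, and the challenge is to systematically drop everything that belongs in $\WW^{(k)}$ while being certain that no leading $\phi^{-2}$, $\phi^{-1}\NNT$, or $\NNT^2$ contribution is missed, and that no $\etat$- or $\NNT\etat$-dependent term without the claimed extra $\phi$ slips through. Working in the frame of Lemma~\ref{lem:frame}, and exploiting the vanishing patterns of $h_{AB}$, $\epsilon_A^B$, and the Tanno tensor to delete off-type index contractions, are the two essential tools that make the accounting tractable.
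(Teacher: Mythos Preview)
Your proposal is correct and follows essentially the same route as the paper: compute the curvature matrix $\Psi_J^{\phantom{J}K}=d\psi_J^{\phantom{J}K}-\psi_J^{\phantom{J}L}\wedge\psi_L^{\phantom{L}K}$ from Proposition~\ref{prop:levicivita}, contract against the diagonal inverse metric $(4\phi^2,\ \phi^2 s^{-1},\ -\phi\hhatinv^{AB})$, and work in the frame of Lemma~\ref{lem:frame} to suppress $\omega_\beta^{\phantom{\beta}\alpha}$; the paper's proof records exactly this setup and then omits the computational details you have sketched. One small caution on your final paragraph: the blanket claim that every occurrence of $\etat^A$ in $\psi$ carries an explicit factor of $\phi$ is not literally true (see for instance the $\tfrac{1}{2}(\etat^A+\phi\NNT\etat^A)\theta$ term in $\psi_\infty^{\phantom{\infty}A}$, or the bare $\etat^B$ in $\psi_0^{\phantom{0}A}$), so the $O(\phi)$ assertion for the $\WW$-remainders requires tracking how these bare $\etat$-terms enter the curvature and then the Ricci contraction, not just inspecting $\psi$ alone.
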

\begin{proof}
The Ricci tensor of $g$ satisfies $\Ric_{JK} = g^{LM}\Riem_{JLMK}$, where $\Riem$ denotes the Riemannian curvature tensor. Using the normal form (\ref{eq:diag}) of $g$ we have
$$
\Ric_{JK} = \phi^2s^{-1}\Riem_{J00K} - \phi\hhatinv^{AB}\Riem_{JABK} + 4\phi^2\Riem_{J\infty\infty K}.
$$
Then with 
\begin{equation}
\label{eq:Psi}
\Psi_J^{\phantom{J}K} := d\psi_J^{\phantom{J}K} - \psi_J^{\phantom{J}L}\wedge\psi_L^{\phantom{L}K}
\end{equation}
denoting the Levi-Civita curvature matrix of $g$,
\begin{equation}
\label{eq:Ric}
\begin{split}
\Ric_{JK} &= \phi^2s^{-1}g_{LK}\Psi_0^{\phantom{0}L}(W_J, \TTT) - \phi\hhatinv^{AB}g_{LK}\Psi_B^{\phantom{B}L}(W_J, W_A)\\ 
&\quad\quad + 4\phi^2g_{LK}\Psi_\infty^{\phantom{\infty}L}(W_J, \NNT) .
\end{split}
\end{equation}
We then just compute using Proposition~\ref{prop:levicivita}. By Lemma~\ref{lem:frame} the calculations may be simplified by working in a frame where the connection forms $\omega_\beta^{\phantom{\beta}\alpha}$ vanish at a point. Again we omit the details.
\end{proof}

\begin{thm}
\label{thm:ache}
Define the \emph{Einstein tensor} by $\Ein := \Ric + 2(n+2)g$. Then there exists an ACH metric $g$ on $X$ that solves $\Ein = O(\phi^n)$ with $\Ein(W, Z) = O(\phi^{n+1})$ for $W\in H$ and $Z\in TM$. Moreover if $\gp$ is another such ACH metric then there exists a diffeomorphism $F$ of $\Xb$ that restricts to the identity on $M$ with $\gp = F^\ast g + \phi^n G$, where $G$ is O(1) and $G(W, Z) = O(\phi)$ for $W\in H$ and $Z\in TM$.
\end{thm}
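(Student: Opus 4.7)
The plan is to work in the normal form of \S\ref{subsec:normal}, in which $g$ is encoded by a scalar $s$, a matrix $\hhat_{AB}$ and a one-form $\etat^A$, subject to the boundary data $s|_M = 4$, $\hhat_{\alpha\bbar}|_M = h_{\alpha\bbar}$ (with $\hhat_{\alpha\beta}|_M = \hhat_{\abar\bbar}|_M = 0$), and $\etat^A|_M$ fixed by Lemma~\ref{lem:TM}. Proposition~\ref{prop:ric} then turns the equations $\Ein_{AB} = 0$, $\Ein_{00} = 0$, $\Ein_{0A} = 0$ into a coupled system of second-order ODEs in $\phi$ for these unknowns. A preliminary inspection using the boundary normalisations shows that the leading divergent ($\phi^{-1}$ or $\phi^{-2}$) pieces of $\Ein_{AB}$, $\Ein_{00}$ and $\Ein_{\infty\infty}$ cancel automatically, so the iterative scheme is consistent at leading order.

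Next I would decompose $\hhat_{AB}$ into its Hermitian trace, Hermitian trace-free, and type-$\alpha\beta$/$\abar\bbar$ pieces, and split $\etat^A$ into $(1,0)$ and $(0,1)$ parts. Each such decomposed quantity satisfies an ODE of the schematic form $\NNT^2 X - c_X\,\phi^{-1}\NNT X = F_X$, with $F_X$ a known source built from lower-order Taylor data and the $\WW^{(\cdot)}$ tensors. The indicial roots are $0$ and a positive integer $k_X$; direct computation should give $k_X = n+2$ for the Hermitian trace of $\hhat$ and for $s$, and $k_X = n+1$ for the remaining components of $\hhat_{AB}$ and for $\etat^A$. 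The recursion then proceeds without obstruction for Taylor orders $\phi^j$ with $j < k_X$ and uniquely determines the corresponding coefficients; choosing any smooth extension with the prescribed Taylor expansion (Borel's lemma) produces an ACH metric $g$ with $\Ein_{00} = O(\phi^n)$ and $\Ein_{AB}, \Ein_{0A} = O(\phi^{n+1})$.

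To extend the vanishing to the transverse components $\Ein_{\infty\infty}$, $\Ein_{\infty 0}$, $\Ein_{\infty A}$, I would apply the contracted second Bianchi identity $\nabla^J\bigl(\Ric_{JK} - \tfrac{1}{2}\Scal\, g_{JK}\bigr) = 0$, following Graham--Hirachi. Relative to the frame $\{\NNT, \TTT, W_A\}$ and using the ACH scaling, the identity becomes, for each fixed $K$, a first-order ODE in $\phi$ for the transverse components of $\Ein$ with source given by the already-small tangential components; standard indicial bookkeeping then forces the transverse components to inherit $O(\phi^n)$ decay.

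For uniqueness, given a second ACH solution $\gp$, I would construct a diffeomorphism $F$ of $\Xb$ equal to the identity on $M$ that puts $\gp$ into the same normal form (same special defining function and adapted frame) as $g$: by the uniqueness in Lemma~\ref{lem:special} and the subsequent uniqueness of $\TTT$, this reduces to an exponential-map argument along the integral curves of $\NNT$. After this gauge-fixing, $F^\ast\gp$ and $g$ agree on $M$ and satisfy the same recursion, so their Taylor coefficients agree up to the indicial thresholds, yielding $\gp = F^\ast g + \phi^n G$; the refined decay $G(W, Z) = O(\phi)$ for $W \in H$, $Z \in TM$ is inherited from the improved exponent $k_X = n+1$ governing $\etat^A$ and the mixed/off-diagonal components of $\hhat_{AB}$. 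The main obstacle is the algebra of the second paragraph: extracting the precise indicial roots from Proposition~\ref{prop:ric} and, in particular, verifying the one-order-improved decay that is the novel feature of the partially integrable setting, responsible for the extra obstruction one-form $\OO_A$.
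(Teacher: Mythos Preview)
Your overall architecture matches the paper's: normal form, iterative Taylor determination of $(s,\hhat_{AB},\etat^A)$ from $\Ein_{00}$, $\Ein_{AB}$, $\Ein_{0A}$, then the contracted Bianchi identity for the $\Ein_{\infty J}$ components, and gauge-fixing via the special defining function for uniqueness. The problem is the indicial analysis in your second paragraph, which is the technical heart of the argument.

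Your schematic ODE $\NNT^2 X - c_X\phi^{-1}\NNT X = F_X$ with roots $0$ and $k_X$ drops the zeroth-order term that is present in the $\hhat_{AB}$ equation (the $2\hhat_{AB}$ in $\phi\Ein_{AB}$ from Proposition~\ref{prop:ric}). The actual recursion obtained by applying $\NNT^k$ to $\phi\Ein_{AB}=0$ and restricting to $M$ has leading coefficient $2\{k^2-(n+1)k-2\}$ in front of $\NNT^k\hhat_{AB}|_M$, and after tracing with $h^{\alpha\bbar}$ one gets $\{k^2-(2n+1)k-2\}$ in front of the trace. Neither polynomial has $n+1$ or $n+2$ as a root; indeed the key (and not entirely obvious) point is that $(n+1)^2+8$ and $(2n+1)^2+8$ are never perfect squares, so \emph{neither ever has an integer root}. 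Thus there is no intrinsic obstruction in the $\hhat_{AB}$ recursion at all. What limits $\hhat_{AB}$ to order $n+1$ is that the right-hand side of its recursion contains $\NNT^k s$, and $s$ is obstructed at $k=n+2$ (the factor $n+2-k$ from $\phi\Ein_{00}$). Your assignment $k_X=n+1$ for the non-trace part of $\hhat_{AB}$ would, if taken literally, yield only $\Ein_{AB}=O(\phi^n)$, one order short of what the theorem asserts; and it would also predict an extra obstruction tensor in $S^2H^*$ that does not exist. The paper in fact extracts one \emph{extra} order for the trace-free and type-$(2,0)/(0,2)$ parts of $\hhat$ at $k=n+2$, precisely because their indicial coefficient does not vanish there.

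So the gap is concrete: redo the recursion keeping the zeroth-order term, identify the correct indicial polynomials $k^2-(n+1)k-2$ (for $\hhat_{AB}$), $k^2-(2n+1)k-2$ (for its trace), $n+2-k$ (for $s$), and $(k-(n+1))(k+2)$ (for $\etat^A$), and note the perfect-square obstruction that rules out integer roots for the first two. Once that is fixed, the rest of your outline goes through essentially as in the paper.
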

\begin{proof}
Our approach is analogous to that of Graham--Hirachi~\cite[Theorem 2.1]{gh}. We prove that the system of equations $\Ein_{00} = O(\phi^n), \Ein_{0A} = O(\phi^{n+1})$ and $\Ein_{AB} = O(\phi^{n+1})$ is necessary and sufficient to uniquely determine, modulo the high-order error terms, the metric $g$. We then apply the contracted Bianchi identity to show that for this metric all the remaining components of $\Ein$ vanish to the correct order.

To begin, if the Einstein tensor vanishes then from Proposition~\ref{prop:ric} it follows that
\begin{eqnarray}
\label{eq:00} \phi \Ein_{00} &=& 2(n+1)\NNT s -2\phi\NNT^2 s +\phi\WW^{(1)} = 0;\\
\label{eq:0A} \Ein_{0A} &=& -4(n+1)\hhat_{AB}\etat^B - 2(n-2)\phi\hhat_{AB}\NNT\etat^B\\
&&\quad + 2\phi^2\hhat_{AB}\NNT^2\etat^B + \WW_A^{(0)} = 0;\nonumber\\
\label{eq:AB} \quad\quad\phi \Ein_{AB} &=& 2\hhat_{AB} - \frac{1}{2}s\hhatinv^{CD}h_{AC}h_{DB} + \phi\Big(2n\NNT\hhat_{AB}\\
&&\ +\hhatinv^{CD}(\NNT\hhat_{CD})\hhat_{AB} + s^{-1}(\NNT s)\hhat_{AB} - 2\phi\NNT^2\hhat_{AB}\Big)\nonumber\\
&&\quad+ \phi\WW_{AB}^{(0)} + \phi^2\WW_{AB}^{(1)} = 0.\nonumber
\end{eqnarray}
Observe that this system is tensorial hence the metric $g$ coming from its solution is well-defined independently of any frame choices. 

Applying $\NNT$ to (\ref{eq:00}) $(k-1)$-times and evaluating on $M$ gives
\begin{equation}
\label{eq:sk}
2\{n+2-k\}\NNT^ks\Big|_M = \textnormal{ (terms involving $\NNT^l\hhat_{AB}, \NNT^ls, \NNT^{l-1}\etat^B$ ($l<k$) on $M$)}.
\end{equation}
We have used here the last sentence in Proposition~\ref{prop:ric}.

Next applying $\NNT$ to (\ref{eq:AB}) $k$ times and evaluating on $M$ gives
\begin{equation}
\label{eq:hk}
\begin{split}
&2\{k^2 - (n+1)k - 2\}\NNT^k\hhat_{AB}\Big|_M = \left(\frac{k}{4} - \frac{1}{2}\right)(\NNT^ks)h_{AB} + kh^{\delta\gbar}(\NNT^k\hhat_{\delta\gbar})h_{AB}\Big|_M\\
&\quad + \textnormal{ (terms involving $\NNT^l\hhat_{AB}, \NNT^ls, \NNT^{l-1}\etat^B$ ($l<k$) on $M$)}.
\end{split}
\end{equation}
Taking the trace of this equation using the Levi metric $h$ gives
\begin{equation}
\label{eq:trhk}
\begin{split}
&\{k^2 - (2n+1)k - 2\}h^{\alpha\bbar}\NNT^k\hhat_{\alpha\bbar}\Big|_M = \\
&\  \frac{n}{4}\left(k - 2\right)\NNT^ks\Big|_M + \textnormal{ (terms involving $\NNT^l\hhat_{AB}, \NNT^ls, \NNT^{l-1}\etat^B$ ($l<k$) on $M$)}.
\end{split}
\end{equation}

Finally applying $\NNT$ to (\ref{eq:0A}) $k$-times and evaluating on $M$ gives
\begin{equation}
\label{eq:ek}
\begin{split}
&\{(k-(n+1))(k+2)\}h_{\alpha\bbar}\NNT^k\etat^{\bbar}\Big|_M = \\
&\quad\quad \textnormal{ (terms involving $\NNT^l\hhat_{AB}, \NNT^ls, \NNT^l\etat^B$ ($l<k$) on $M$)}.
\end{split}
\end{equation}

Inspection shows that we can now use (\ref{eq:sk}), (\ref{eq:hk}), (\ref{eq:trhk}) and (\ref{eq:ek}) to iteratively determine the desired Taylor coefficients $\NNT^ks\Big|_M, \NNT^k\hhat_{AB}\Big|_M$, up to some finite order. Obstructions occur when $k$ is such that the contents of one of the sets of braces in (\ref{eq:sk}), (\ref{eq:hk}), (\ref{eq:trhk}) or (\ref{eq:ek}) vanish. 

The contents of the braces in (\ref{eq:hk}) never vanish because they do so if and only if
$$
k = \frac{n+1 \pm \sqrt{(n+1)^2 + 8}}{2},
$$
and $(n+1)^2 + 8$ can never be a perfect square. Similarly the contents of the braces in (\ref{eq:trhk}) never vanish. 

Obstructions do occur when $k = n+1$ in (\ref{eq:ek}) or $k = n+2$ in (\ref{eq:sk}). This means that we can determine the power series for $\etat^A$ only up to the Taylor coefficient of $\phi^{n}$, and that of $s$ and $h^{\alpha\bbar}\hhat_{\alpha\bbar}$ (and hence $\hhat_{\alpha\bbar}$ itself) only up to the Taylor coefficient of $\phi^{n+1}$. In fact $\hhat_{\alpha\beta}$ (and $\hhat_{\abar\bbar}$) and the trace-free part of $\hhat_{\alpha\bbar}$ are determined to one order higher. Indeed, setting $k = n+2$ in (\ref{eq:hk}) and using (\ref{eq:trhk}) gives
\begin{equation}
\label{eq:htf}
\begin{split}
&\NNT^{n+2}\hhat_{AB} - (2n)^{-1}h^{\delta\gbar}(\NNT^{n+2}\hhat_{\delta\gbar})h_{AB}\Big|_M = \\
&\quad + \textnormal{ (terms involving $\NNT^l\hhat_{AB}, \NNT^ls, \NNT^{l-1}\etat^B$ ($l<n+2$) on $M$)},
\end{split}
\end{equation}
whence our claim.

At this stage we have shown that $\Ein_{00} = O(\phi^n), \Ein_{0A} = O(\phi^{n+1})$ and $\Ein_{AB} = O(\phi^{n+1})$. In order to show that $\Ein_{\infty J} = O(\phi^n)$, we use the contracted Bianchi identity
$$
g^{JK}D_L\Ric_{JK} = 2g^{JK}D_J\Ric_{KL},
$$
where here $D$ denotes the Levi-Civita connection of $g$. Since $D$ is compatible with $g$ we have
\begin{equation}
\label{eq:bianchi}
g^{JK}D_L\Ein_{JK} = 2g^{JK}D_J\Ein_{KL}.
\end{equation}

Introduce the notation that indices $\Pi$ and $\Sigma$ will run $0,1,\dots,n,\oneb,\dots,\nb$. Set $L= \infty$ in (\ref{eq:bianchi}) and convert the expression to a local one involving Levi-Civita connection forms:
\begin{equation}
\label{eq:e00}
\begin{split}
&4\phi\NNT \Ein_{\infty\infty} + 2\hhatinv^{AB}\Psi_{B\phantom{\infty}A}^{\phantom{B}\infty}\Ein_{\infty\infty} - 2\phi s^{-1}\Psi_{0\phantom{\infty}0}^{\phantom{0}\infty}\Ein_{\infty\infty} =\\
&\quad\phi s^{-1}(\NNT \Ein_{00} - 2\Psi_{0\phantom{\Pi}\infty}^{\phantom{0}\Pi}\Ein_{\Pi 0})\\
&\quad\quad- \hhatinv^{AB}(\NNT \Ein_{AB} - \Psi_{A\phantom{J}\infty}^{\phantom{A}J}\Ein_{JB} - \Psi_{B\phantom{J}\infty}^{\phantom{B}J}\Ein_{AJ})\\
&\quad\quad\quad + 2\hhatinv^{AB}(W_A\Ein_{B\infty}  - \Psi_{B\phantom{\Pi}A}^{\phantom{B}\Pi}\Ein_{\Pi\infty})\\
&\quad\quad\quad\quad -2\phi s^{-1}(\TTT\Ein_{0\infty} - \Psi_{0\phantom{\Pi}0}^{\phantom{0}\Pi}\Ein_{\Pi\infty}).
\end{split}
\end{equation}
Next taking $L=\Pi$ in (\ref{eq:bianchi}) leads to
\begin{equation}
\label{eq:eic}
\begin{split}
&8\phi\NNT \Ein_{\infty \Pi} + 8\Ein_{\infty\Pi} + 8\phi \Psi_{\infty\phantom{\Sigma}\Pi}^{\phantom{\infty}\Sigma}\Ein_{\Sigma\infty}\\
&-2\phi s^{-1}(\Psi_{0\phantom{\infty}0}^{\phantom{0}\infty}\Ein_{\infty\Pi} + \Psi_{0\phantom{\infty}\Pi}^{\phantom{0}\infty}\Ein_{\infty 0})\\
&+ \hhatinv^{AB}(2\Psi_{B\phantom{\infty}A}^{\phantom{B}\infty}\Ein_{\infty \Pi} - \Psi_{A\phantom{\infty}\Pi}^{\phantom{A}\infty}\Ein_{\infty B} -
\Psi_{B\phantom{\infty}\Pi}^{\phantom{B}\infty}\Ein_{\infty A}) =\\
&\quad \phi s^{-1}(W_\Pi\Ein_{00} - 2\Psi_{0\phantom{\Sigma}\Pi}^{\phantom{0}\Sigma}\Ein_{\Sigma 0})\\
&\quad\quad -2\phi s^{-1}(\TTT\Ein_{0\Pi} - \Psi_{0\phantom{\Sigma}0}^{\phantom{0}\Sigma}\Ein_{\Sigma\Pi})\\
&\quad\quad\quad +2\hhatinv^{AB}(W_A\Ein_{B\Pi} - \Psi_{B\phantom{\Sigma}A}^{\phantom{B}\Sigma}\Ein_{\Sigma\Pi})\\
&\quad\quad\quad\quad -\hhatinv^{AB}(W_\Pi\Ein_{AB} - \Psi_{A\phantom{\Sigma}\Pi}^{\phantom{A}\Sigma}\Ein_{\Sigma B} - \Psi_{B\phantom{\Sigma}\Pi}^{\phantom{B}\Sigma}\Ein_{A\Sigma})\\
&\quad\quad\quad\quad\quad + 4\phi W_\Pi\Ein_{\infty\infty}.
\end{split}
\end{equation}

We now argue by induction, supposing that $\Ein_{\infty\infty}$ and $\Ein_{\infty \Pi}$ vanish to orders $O(\phi^{t-1})$ and $O(\phi^t)$ respectively, for some integer $t$. Proposition~\ref{prop:ric} tells us that this is true when $t=0$. Suppose $\Ein_{\infty\infty} = \lambda\phi^{t-1}$ for a function $\lambda$. Inserting this formula into (\ref{eq:e00}), a short calculation using Proposition~\ref{prop:levicivita} and the known orders of vanishing of the other components of $\Ein$ gives that
$$
(t-1-2(n+1))\lambda = O(\phi), \textnormal{ for } t-1 < n.
$$
So provided $t-1 < n$ our induction argument shows that in fact $\lambda = O(\phi)$, i.e., $\Ein_{\infty\infty}$ is $O(\phi^{t})$.

On the other hand, suppose $\Ein_{\infty\Pi} = \mu_{\Pi}\phi^t$. Insert this formula into (\ref{eq:eic}) and a short calculation implies that
$$
(t-(n+2))\mu_{0} = O(\phi), \textnormal{ for } t < n,
$$
and
$$
(t-n)\mu_{A} = O(\phi), \textnormal{ for } t < n.
$$
So provided $t<n$ we have that $\mu_{\Pi} = O(\phi)$, i.e., $\Ein_{\infty\Pi}$ is $O(\phi^{t+1})$. 
We conclude that $\Ein_{\infty\infty}$ and $\Ein_{\infty \Pi}$ all vanish to orders $O(\phi^n)$, and this finishes the proof of the first assertion of the theorem.

Our argument thus far has also shown uniqueness of $g$, modulo high order error terms, provided $g$ is of the normal form (\ref{eq:diag}). But any ACH metric can be put into this normal form by using a special defining function (see \S~\ref{subsec:normal}). And changing defining function is just a diffeomorphism of $\Xb$ that restricts to the identity on $M$. This completes the proof of the theorem.
\end{proof}

Henceforth we refer to any metric $g$ satisfying the hypotheses of Theorem~\ref{thm:ache} as an \emph{approximately Einstein ACH metric}.

The proof of Theorem~\ref{thm:ache} shows that in general there are obstructions to the existence of an ACH metric $g$ on $X$ that solves $\Ein = O(\phi^m)$, for all $m$, with $\phi^2g$ and $\phi g|_H$ extending smoothly to $M$. Indeed if we define the \emph{obstruction tensors}
$$
\BB := \phi^{-n}\Ein(T,T)|_M
$$
and
$$
\OO_A := \phi^{-(n+1)}\Ein(T,W_A)|_M,
$$
then (\ref{eq:sk}) and (\ref{eq:ek}) imply that the vanishing of the $\BB$ and $\OO_A$ is a necessary condition for the existence of a metric $g$ just described. Observe that $\BB$ is a scalar function whereas $\OO_A$ is a section of the bundle $\C T^*M/\langle\theta\rangle$, with $\langle\theta\rangle$ denoting the ideal generated by $\theta$. (The bundle $\C T^*M/\langle\theta\rangle$ is defined independently of the choice of $\theta$ but after choosing a contact form may be identified with $H^*$.) Equations (\ref{eq:sk}) and (\ref{eq:ek}) also show that $\BB$ and $\OO_A$ are given as (Levi metric contractions of) TWT curvature, torsion and Tanno tensors and their TWT covariant derivatives.

The obstructions $\BB$ and $\OO_A$, as they are defined, a priori depend on choices of approximately Einstein ACH metric $g$ and contact form $\theta$. To make sense then of the necessary condition stated in the last paragraph, we need to confirm that $\BB$ and $\OO_A$ are in fact independent of the freedom of choice of approximately Einstein ACH metric, and also that their vanishing is independent of choice of contact form.

\begin{prop}
\label{prop:obs}
\emph{(i)} The obstruction tensors $\BB$ and $\OO_A$ are well-defined independently of the ambiguity in approximately Einstein ACH metric $g$.\\
\emph{(ii)} Under a change in contact form $\that = e^{2\Upsilon}\theta$, the obstruction tensors satisfy
\begin{equation}
\label{eq:BBh}
\BBh = e^{-2(n+2)\Upsilon}\BB
\end{equation}
and
\begin{equation}
\label{eq:OOAh}
\OOh_A = e^{-2(n+2)\Upsilon}(\OO_A -2i\phi^{-(n+1)}\Ein(\Upsilon^\alpha W_\alpha - \Upsilon^{\bbar}W_{\bbar}, W_A)|_M).
\end{equation}
\emph{(iii)} If $(M, H, J)$ is such that $\BB$ vanishes then, under a change in contact form $\that = e^{2\Upsilon}\theta$, the obstruction $\OO_A$ satisfies
\begin{equation}
\label{eq:OOA}
\OOh_A = e^{-2(n+2)\Upsilon}\OO_A.
\end{equation} 
\end{prop}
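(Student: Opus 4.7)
For part (i), my plan is to use Theorem~\ref{thm:ache} to write $\gp = F^*g + \phi^n G$ with $F|_M = \id$ and $G$ satisfying the stated vanishing. Because $F$ fixes $M$, its pullback preserves pointwise values there, so the task reduces to showing that the perturbation $\phi^n G$ does not alter the leading Taylor coefficients $\phi^{-n}\Ein(T,T)|_M$ or $\phi^{-(n+1)}\Ein(T,W_A)|_M$. In normal form this perturbation corresponds precisely to shifts of the ambiguous Taylor coefficients $\NNT^{n+2}s|_M$, $\NNT^{n+2}\hhat_{AB}|_M$, and $\NNT^{n+1}\etat^A|_M$ left undetermined by the iteration in the proof of Theorem~\ref{thm:ache}. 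Using Proposition~\ref{prop:ric}, I would verify that the resulting variations at the obstruction order cancel identically; for example, an $\etat$-shift by $\phi^{n+1}\delta\etat^A$ changes $\Ein_{0A}$ at order $\phi^{n+1}$ by $[-4(n+1) - 2(n-2)(n+1) + 2n(n+1)]h_{AB}\delta\etat^B \equiv 0$. These cancellations are indicial reflections of the same numerical coincidences (vanishing of the braces in (\ref{eq:sk}) and (\ref{eq:ek})) that produced the obstructions in the first place.

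For part (ii), I would first derive the transformations
\[
\hat T = e^{-2\Upsilon}\bigl(T - 2i\Upsilon^\alpha W_\alpha + 2i\Upsilon^{\bbar} W_{\bbar}\bigr), \qquad \hat\phi = e^{2\Upsilon}\phi + O(\phi^2),
\]
from solving $\that(\hat T) = 1$, $\hat T \hook d\that = 0$ via (\ref{eq:levi}), and from Lemma~\ref{lem:special} applied to $(\hat\theta, \hat h) = (e^{2\Upsilon}\theta, e^{2\Upsilon}h)$. Next, I would argue that an approximately Einstein ACH metric $\hat g$ for $(\hat\theta, J)$ can be related to $g$ via a diffeomorphism $\Psi$ of $\Xb$ fixing $M$ (essentially a reparametrization sending $\phi$ to $\hat\phi$ in the normal direction), so that $\Psi^*\hat g$ agrees with $g$ modulo the ambiguity permitted by Theorem~\ref{thm:ache}; by part (i) this allows $\Ein(g)$ to be used in place of $\hat\Ein$ for computing the required boundary values. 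Setting $X := -2i\Upsilon^\alpha W_\alpha + 2i\Upsilon^{\bbar} W_{\bbar} \in H$ and expanding
\[
\Ein(\hat T, \hat T) = e^{-4\Upsilon}\bigl(\Ein(T,T) + 2\Ein(T, X) + \Ein(X, X)\bigr),
\]
Theorem~\ref{thm:ache} makes the last two terms $O(\phi^{n+1})$, so only $\Ein(T,T)$ survives division by $\hat\phi^n$ and restriction to $M$, giving (\ref{eq:BBh}). For $\OOh_A$, taking $\hat W_A = W_A$, the cross term $\Ein(X, W_A)$ is itself of the critical order $\phi^{n+1}$ and persists, producing the extra term in (\ref{eq:OOAh}).

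For part (iii), I would again exploit the freedom from part (i) to select a convenient representative. When $\BB$ vanishes, equation (\ref{eq:sk}) at $k = n+2$ admits a consistent solution for $\NNT^{n+2}s|_M$, and (\ref{eq:trhk}) and (\ref{eq:hk}) at $k = n+2$ then determine $\NNT^{n+2}\hhat_{AB}|_M$, yielding an approximately Einstein ACH metric with the stronger decay $\Ein_{AB} = O(\phi^{n+2})$. For such a representative, the extra term in (\ref{eq:OOAh}) evaluates to $\phi^{-(n+1)}\cdot O(\phi^{n+2})|_M = 0$, establishing (\ref{eq:OOA}); the well-definedness from part (i) then extends the identity to any approximately Einstein ACH metric.

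The hardest step will be the diffeomorphism-identification in part (ii): verifying that $\Psi^*\hat g$ actually satisfies the ACH conditions of Definition~\ref{df:ach} with respect to $\theta$---especially the delicate condition (4) involving the Reeb field, which does not naively hold for $g$ when tested against $\hat T$---requires careful bookkeeping of how these conditions transform under the change of contact form. Once that identification is in place, the rest reduces to substituting the transformation formulas above and invoking the vanishing orders of $\Ein$ guaranteed by Theorem~\ref{thm:ache}.
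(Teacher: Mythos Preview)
Your proposal is correct and follows essentially the same route as the paper. For (i) you reduce to normal form and exploit the indicial vanishing of the braces in (\ref{eq:sk}) and (\ref{eq:ek}) at the obstruction orders, exactly as the paper does; your explicit check $-4(n+1)-2(n-2)(n+1)+2n(n+1)=0$ is the content of (\ref{eq:ek}) at $k=n+1$. For (ii) you derive the same transformation formulas for $\hat T$ and $\hat\phi$ and substitute, and for (iii) you use the freedom in the $(n+2)$-th Taylor coefficients to arrange $\Ein_{AB}=O(\phi^{n+2})$, just as the paper does via (\ref{eq:Einp})--(\ref{eq:trEinp}) and (\ref{eq:htf}).

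One comparative remark on (ii): the paper's proof is terse---it simply substitutes (\ref{eq:Th}) and (\ref{eq:phat}) into the defining formulae and invokes the vanishing orders of $\Ein$ from Theorem~\ref{thm:ache}---implicitly identifying $\hat\Ein$ with $\Ein$ at the relevant order. You are right that this identification needs justification, since $g$ is not literally ACH for $\hat\theta$ (condition (4) of Definition~\ref{df:ach} fails, as $g(\hat T,W)$ picks up an $O(\phi^{-1})$ piece from the $H$-component of $\hat T$). Your plan to mediate via a boundary-fixing diffeomorphism $\Psi$ putting $\hat g$ into normal form relative to $\phi$ is the correct way to make this precise; once $\Psi^*\hat g$ and $g$ are seen to differ by the ambiguity of Theorem~\ref{thm:ache}, part (i) closes the loop. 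So your version is slightly more careful than the paper's, but the two arguments are the same in substance.
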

\begin{proof}
To prove (i) we fix a contact form and then consider another approximately Einstein ACH metric $\gp$ with Einstein tensor $\Einp$ and obstructions $\BB^\prime$ and $\OO_A^\prime$. Writing $\gp$ in normal form as
$$
\gp =  \left(\begin{array}{ccc} (2\phi)^{-2} & 0 & 0 \\ 0 & \phi^{-2}\spr & 0 \\ 0 & 0 & -\phi^{-1}\htp_{AB} \end{array}\right), 
$$
Theorem~\ref{thm:ache} and its proof imply that
\begin{equation}
\label{eq:sp}
\spr = s + \phi^{n+2}\kappa
\end{equation}
and
\begin{equation}
\label{eq:htp}
\htp_{AB} = \hhat_{AB} + \phi^{n+2}\lambda_{AB},
\end{equation}
where $\kappa$ and $\lambda_{AB}$ are $O(1)$. Also if we write the modified coframe (recall \S\ref{sec:approx}) for $\gp$ as 
$$
\thetatp^A = \theta^A + \phi\etatp^A\theta,
$$
we have that for some $\mu^A$ of order $O(1)$,
\begin{equation}
\label{eq:ep}
\etatp^A = \etat^A + \phi^{n+1}\mu^A.
\end{equation}
Substitute (\ref{eq:sp}), (\ref{eq:htp}) and (\ref{eq:ep}) into (\ref{eq:00}) and (\ref{eq:0A}), and then look at (\ref{eq:sk}) and (\ref{eq:ek}) to confirm that in fact $\BB^\prime = \BB$ and $\OO_A^\prime = \OO_A$ as desired.

For (ii), one checks that the Reeb field $\Th$ of $\that$ satisfies
\begin{equation}
\label{eq:Th}
\Th = e^{-2\Upsilon}(T - 2i\Upsilon^\alpha W_\alpha + 2i\Upsilon^{\bbar}W_{\bbar}),
\end{equation}
while the special defining function $\phat$ associated to $\that$ satisfies 
\begin{equation}
\label{eq:phat}
\phat = e^{2\Upsilon}\phi + O(\phi^2).
\end{equation}
Substituting (\ref{eq:Th}) and (\ref{eq:phat}) into the formulae for the obstruction tensors, and using the orders of vanishing of $\Ein$ given by Theorem~\ref{thm:ache}, yields the result. 

Finally we turn to (iii). First observe that since $\OO_A$ is independent of the choice of approximately Einstein ACH metric, it suffices to choose one good such metric so that $\OO_A$ manifestly satisfies the desired property (\ref{eq:OOA}). 

Fix a contact form and let $g$ be any approximately Einstein ACH metric, given in normal form with tensors $s$ and $\hhat_{AB}$, and with Einstein tensor $\Ein$. For specificity we might as well take $s$ and $\hhat_{AB}$ to be the finite Taylor polynomials of degree $n+1$ given by the proof of Theorem~\ref{thm:ache}. Let now $\gp$ be the approximately Einstein ACH metric characterised by having normal form with
\begin{equation}
\label{eq:skap}
\spr = s + \phi^{n+2}\kappa
\end{equation}
and
\begin{equation}
\label{eq:hlam}
\htp_{AB} = \hhat_{AB} + \phi^{n+2}\lambda_{AB},
\end{equation}
for some prescribed tensors $\kappa$ and $\lambda_{AB}$ on $X$ smooth up to $M$. Evidently prescribing $\kappa$ determines $\NNT^{n+2}s|_M$, and by our assumption that $\BB$ vanishes there is no contradiction in (\ref{eq:sk}). 

If we write $\Einp$ for the Einstein tensor of $\gp$, then substituting (\ref{eq:skap}) and (\ref{eq:hlam}) into (\ref{eq:AB}) gives
\begin{equation}
\label{eq:Einp}
\begin{split}
\phi\Einp_{AB} &= \phi\Ein_{AB} - 2n\phi^{n+2}\lambda_{AB} + (n+2)\phi^{n+2}h^{CD}\lambda_{CD}h_{AB}\\
&\quad\quad + \frac{n}{4}\phi^{n+2}\kappa h_{AB} + O(\phi^{n+3}).
\end{split}
\end{equation}
Multiply both sides by $\phi^{-n-2}$, set $\phi = 0$, and take the trace to yield
\begin{equation}
\label{eq:trEinp}
h^{AB}\phi^{-n-1}\Einp_{AB}|_M = \Big(h^{AB}\phi^{-n-1}\Ein_{AB} + 2n(n+1)h^{AB}\lambda_{AB} + \frac{n^2}{2}\kappa\Big)\Big|_M.
\end{equation}

Now fix the trace part of $\lambda_{AB}$ on $M$ by setting
$$
h^{AB}\lambda_{AB}\Big|_M = -\frac{1}{2n(n+1)}\Big(\frac{n^2}{2}\kappa + h^{AB}\phi^{-n-1}\Ein_{AB}\Big)\Big|_M.
$$
Then (\ref{eq:trEinp}) implies that 
$$
h^{AB}\Einp_{AB} = O(\phi^{n+2}). 
$$
But (\ref{eq:htf}) showed that the trace-free part of $\Einp_{AB}$ is already of order $O(\phi^{n+2})$. Thus $\Einp_{AB} = O(\phi^{n+2})$. Equation (\ref{eq:OOAh}) finishes the proof.
\end{proof}

\section{Volume renormalization and proof of the Main Theorem}
\label{sec:vol}

We normalise the volume form $dv$ of $g$ on $X$ by defining it as
$$
dv := \frac{\sqrt{\det g}}{\sqrt{\det h}} \:d\phi\wedge\theta\wedge (d\theta)^n.
$$
Using (\ref{eq:diag}),
$$
dv = (-\phi)^{-n-2}\frac{\sqrt{s\det\hhat}}{2\sqrt{\det h}}\: d\phi\wedge\theta\wedge (d\theta)^n.
$$
From \S\ref{sec:einstein} we know the power series expansion of $s$ and $\hhat$ up to and including the $\phi^{n+1}$ term. Thus for some locally determined functions $v^{(j)}$ on $M$ we have
\begin{eqnarray*}  
dv &= &\phi^{-n-2}(v^{(0)} + v^{(1)}\phi + v^{(2)}\phi^2 + \cdots +v^{(n+1)}\phi^{n+1} + \\ &&\quad\quad\quad\textnormal{\:\:higher order terms in $\phi$})\: d\phi \wedge\theta\wedge (d\theta)^n.\nonumber
\end{eqnarray*} 

Now pick an $\epsilon_0$ with $-1\ll\epsilon_0<\epsilon<0$. Then
$$
\Vol(\{\epsilon_0<\phi < \epsilon\}) = \int_{\epsilon_0}^\epsilon\int_M dv,
$$
and we have the asymptotic expansion 
\begin{equation*} 
\Vol(\{\epsilon_0<\phi < \epsilon\}) = c_0\epsilon^{-n-1} + c_1\epsilon^{-n} +\cdots + c_n\epsilon^{-1} + L\log(-\epsilon) + V + o(1). 
\end{equation*} 
The constant term $V$ is \textit{renormalized volume}. The coefficients  $c_j$ and $L$ are integrals over $M$ of local TWT invariants (complete contractions of curvature, torsion, Tanno tensor and their covariant derivatives) of  $M$, with respect to the volume element $\theta\wedge (d\theta)^n$. 

\begin{prop}
\label{prop:L}
The log term coefficient $L= \int_M v^{(n+1)}\theta\wedge (d\theta)^n$ is independent of the choice of contact form $\theta$.
\end{prop}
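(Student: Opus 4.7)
My plan is to adapt the volume-comparison argument of Graham~\cite{graham} (for conformally compact Einstein metrics) and its CR analogue in~\cite{seshadri04} to the present ACH setting. The philosophy is that the log term coefficient, unlike the divergent coefficients $c_j$ or the renormalized volume $V$, behaves like a boundary residue and is insensitive to smooth changes of defining function. I want to show that rescaling the contact form, $\theta\mapsto \that = e^{2\Upsilon}\theta$, essentially amounts to such a change of defining function, up to a diffeomorphism of $\Xb$ and high-order error terms that do not contribute to the log coefficient.

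Let $g$ and $g'$ be approximately Einstein ACH metrics associated to $(J,\theta)$ and $(J,\that)$ respectively. The first step is to exhibit a diffeomorphism $F$ of $\Xb$ that restricts to the identity on $M$ and for which $g'-F^*g$ has order high enough in $\phi$ to be irrelevant to the log coefficient. I expect this to follow by adapting the uniqueness portion of Theorem~\ref{thm:ache}: the normal form (\ref{eq:diag}) for $\that$ differs from the one for $\theta$ only by a reparametrization of the transverse direction and by the choice of $\TTT$-direction inside the $g$-orthogonal complement of $H$, both of which are encoded in such a diffeomorphism. Setting $\tilde\phi := \phat\circ F^{-1}$ then produces a defining function with $\tilde\phi = e^{2\Upsilon}\phi + O(\phi^2)$ (as in the proof of Proposition~\ref{prop:obs}(ii)), and diffeomorphism invariance of volume gives
$$
\Vol_{g'}(\{-1<\phat<\epsilon\}) = \Vol_g(\{-1<\tilde\phi<\epsilon\});
$$
hence the log coefficient $\hat L$ of the $(g',\phat)$-expansion equals that of the $(g,\tilde\phi)$-expansion.

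It therefore suffices to compare, for the single metric $g$, the volume expansions associated to $\phi$ and to $\tilde\phi$. On the level set $\{\tilde\phi = \epsilon\}$ one has $\phi = \phi_\epsilon(x) := e^{-2\Upsilon(x)}\epsilon + O(\epsilon^2)$, so
$$
\Vol_g(\{-1<\tilde\phi<\epsilon\}) - \Vol_g(\{-1<\phi<\epsilon\}) = \int_M\!\int_{\epsilon}^{\phi_\epsilon(x)}\!\phi^{-n-2}\sum_{k\ge 0}v^{(k)}\phi^k\,d\phi\cdot\theta\wedge (d\theta)^n.
$$
For each $k\ne n+1$ the inner integral is rational in $\epsilon$ and $\phi_\epsilon(x)$, contributing no logarithm; for $k=n+1$ it equals $v^{(n+1)}[\log(-\phi_\epsilon(x)) - \log(-\epsilon)] = -2\Upsilon(x)v^{(n+1)} + O(\epsilon)$, which is also free of $\log(-\epsilon)$. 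Hence the difference of volume expansions has no log term, which forces $\hat L = L$.

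The main obstacle I anticipate is the first step: constructing $F$ and verifying that the high-order discrepancy between $g'$ and $F^*g$ does not affect the log coefficient. The existence of $F$ should be standard, parallel to the ``change of special defining function'' argument at the end of the proof of Theorem~\ref{thm:ache}. That the $\phi^n G$ ambiguity from Theorem~\ref{thm:ache} has no effect relies on the sparse structure of $G$ (namely $G(W,Z) = O(\phi)$ for $W\in H$ and $Z\in TM$), which should push its contribution to $dv$ to sufficiently high order in $\phi$ that integration against $\phi^{-n-2}$ produces no logarithm. Once these points are settled, the asymptotic integration in the third paragraph is routine.
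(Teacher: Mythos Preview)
Your integration argument in the third paragraph is correct and is exactly what the paper does. The detour through a second metric $g'$ and a diffeomorphism $F$, however, is unnecessary, and the paper avoids it entirely.

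The key observation you are missing is that the notion of ``approximately Einstein ACH metric'' does not depend on the choice of contact form within its conformal class. The conditions in Definition~\ref{df:ach} involve $\theta$ only through the conformal class $[h]$ and through $\theta^2|_M$ up to smooth multiples; the Einstein condition $\Ein = O(\phi^n)$, etc., is insensitive to replacing $\phi$ by $\phat = e^{2f}\phi$. Hence the \emph{same} metric $g$ is an approximately Einstein ACH metric for both $(J,\theta)$ and $(J,\that)$. Lemma~\ref{lem:special} then produces two special defining functions $\phi$ and $\phat$ for this single $g$, related by $\phat = e^{2f(x,\phi)}\phi$ with $f$ smooth on $\Xb$ and $f|_M = \Upsilon$.

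With this in hand the paper's proof is exactly your third paragraph (with $\phat$ playing the role of your $\tilde\phi$): one compares $\Vol_g(\{\epsilon_0<\phat<\epsilon\})$ with $\Vol_g(\{\epsilon_0<\phi<\epsilon\})$ by integrating $dv$ over the sliver between the two level sets, and the $\phi^{-1}$ term contributes only $\log b(x,\epsilon)$ (where $\phi = \phat\, b(x,\phat)$), which is smooth in $\epsilon$ and hence carries no $\log(-\epsilon)$.

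So your ``main obstacle'' --- constructing $F$ and controlling the $\phi^n G$ discrepancy --- simply does not arise: take $g'=g$, $F=\id$, $G=0$. Your plan is not wrong, but it routes through Theorem~\ref{thm:ache}'s uniqueness statement and an error analysis that the direct argument renders superfluous.
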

\begin{proof}
The argument is familiar from~\cite{graham} and~\cite{seshadri04}, but we repeat it here for reference. Let $\theta$ and $\that = e^{2\Upsilon}\theta$ be two contact forms on $M$, for $\Upsilon$ a function on $M$, with associated (by Lemma~\ref{lem:special}) special defining functions $\phi$ and $\phat$. So $\phat = e^{2f(x,\phi)}\phi$, for a function $f$ on $\Xb$ near $M$. We can inductively solve the equation $\phat = e^{2f(x,\phi)}\phi$  for $\phi$ to give $\phi = \phat b(x,\phat)$, for a uniquely determined  positive function $b$. Set $\ehat(x,\epsilon) := \epsilon b(x,\epsilon)$; it follows that $\{\epsilon_0 < \phat < \epsilon\}$ is equivalent to $\{\epsilon_0 < \phi < \ehat(x,\epsilon)\}$. Then
\begin{equation*} 
\begin{split} 
\label{eq:voldiff} 
&\Vol(\{\epsilon_0 < \phi < \ehat\}) - \Vol(\{\epsilon_0 < \phi < \epsilon\})= \int_{\epsilon}^{\ehat}\int_Mdv\\  
&\quad = \int_{\epsilon} ^{\ehat}\int_M\phi^{-n-2}(v^{(0)}  + v^{(1)}\phi  + \cdots +v^{(n+1)}\phi^{n+1})d\phi\wedge\theta\wedge (d\theta)^n + o(1). 
\end{split} 
\end{equation*} 
In this expression the $\phi^{-1}$ term contributes $\log b(x,\epsilon)$, so there is no $\log(-\epsilon)$ term as $\epsilon\rightarrow 0$. 
\end{proof}

\begin{thm}
\label{thm:L}
The log term coefficient $L$ is independent of the choice of partially integrable almost CR structure $J$.
\end{thm}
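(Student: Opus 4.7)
The plan is to argue by smooth deformation of $J$, following the template of~\cite{gh} and~\cite{seshadri05}. Since the space of partially integrable almost CR structures compatible with $H$ is contractible, any two such structures $J_0$ and $J_1$ can be joined by a smooth family $\{J_s\}_{s\in[0,1]}$. Fix a contact form $\theta$ (permissible by Proposition~\ref{prop:L}), and for each $s$ let $g_s$ be an approximately Einstein ACH metric associated to $(M,H,J_s,\theta)$ by Theorem~\ref{thm:ache}. Writing $L(s)$ for the resulting log coefficient, it suffices to show $L'(s)\equiv 0$.

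First I would reduce $L'(s)$ to a density computation. Set $\dot g := \partial_s g_s$; this variation is well defined modulo the $O(\phi^n)$ and diffeomorphism ambiguities allowed by Theorem~\ref{thm:ache}. Combining the expansion
\[
dv_s = \phi^{-n-2}\bigl(v^{(0)}_s + v^{(1)}_s\phi + \dots + v^{(n+1)}_s\phi^{n+1} + \cdots\bigr)\,d\phi\wedge\theta\wedge(d\theta)^n
\]
with the standard identity $\partial_s dv_s = \tfrac{1}{2}(\tr_{g_s}\dot g_s)\,dv_s$ shows that $L'(s)$ equals the coefficient of $\phi^{n+1}$ in the product $\tfrac{1}{2}(\tr_{g_s}\dot g_s)\cdot(v^{(0)}_s + v^{(1)}_s\phi + \cdots)$, integrated over $M$ against $\theta\wedge(d\theta)^n$.

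The core step is to rewrite $\tr_{g_s}\dot g_s$ as a divergence up to an error of order $O(\phi^{n+2})$, which would then not contribute to the log coefficient. Differentiating $\Ein_s = O(\phi^n)$ (together with the refinement $\Ein_s(W,Z) = O(\phi^{n+1})$ for $W\in H$) in $s$ yields
\[
\partial_s\Ric_{JK} + 2(n+2)\dot g_{JK} = O(\phi^n),
\]
with the analogous refinement on horizontal entries. The linearised second contracted Bianchi identity applied to this relation, combined with the metric-compatibility of the Levi-Civita connection of $g_s$, then converts it into an identity of the schematic form
\[
\tr_{g_s}\dot g_s = \nabla^J U_J + O(\phi^{n+2}),
\]
for a vector field $U$ on $X$ polynomial in $\dot g_s$ and its first covariant derivatives. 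Applying Stokes' theorem and observing that $U$ is polyhomogeneous in $\phi$ with no logarithmic terms (being assembled from the smooth Taylor data of $g_s$ and $\dot g_s$ strictly below the obstruction orders), the resulting boundary flux across $\{\phi = \epsilon\}$ admits only a power-series expansion in $\epsilon$. The $O(\phi^{n+2})$ remainder contributes nothing to the log coefficient either, and one concludes $L'(s) \equiv 0$.

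The main obstacle is the construction of the divergence identity in the third paragraph: one must track the orders of vanishing of each component of $\partial_s\Ein_s$ carefully, since the horizontal components must vanish one order faster than the transverse ones in order to push the error past $O(\phi^{n+1})$ into $O(\phi^{n+2})$. A secondary complication, peculiar to the partially integrable setting, is that the deformation of $J$ permutes the type decomposition $H=H^{1,0}\oplus H^{0,1}$, so one must either work with a local frame $\{W_A\}$ that itself deforms with $s$, or recast the calculation in a $J$-independent framework on $\C H$. The remaining pieces of the argument parallel the conformal case~\cite{gh} and the integrable CR case~\cite{seshadri05}, adapted to the present ACH geometry.
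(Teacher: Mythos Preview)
Your proposal follows essentially the same route as the paper: a one-parameter deformation $J_t$, the identity $\partial_s dv_s=\tfrac12(\tr_{g_s}\dot g_s)\,dv_s$, the rewriting of $\tr_{g_s}\dot g_s$ as a divergence modulo $O(\phi^{n+2})$, and a boundary-flux analysis after Stokes. Two points of precision are worth noting.

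First, the identity you invoke is not the ``linearised second contracted Bianchi identity''; it is the first variation formula for scalar curvature (Besse, Theorem~1.174),
\[
\Scal^\bullet \;=\; g^{\bullet\phantom{,JK}JK}_{JK,} - g^{\bullet\phantom{J}J\phantom{,K}K}_{J\phantom{K},K} - \Ric^{JK}g^{\bullet}_{JK}.
\]
The paper uses this directly: since $\Scal=-4(n+1)(n+2)+O(\phi^{n+2})$ one has $\Scal^\bullet=O(\phi^{n+2})$, and replacing $\Ric$ by $-2(n+2)g$ produces exactly your claimed relation $2(n+2)\tr_g\dot g=\text{(divergence)}+O(\phi^{n+2})$. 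The refined vanishing $\Ein_{AB},\Ein_{0A}=O(\phi^{n+1})$ is precisely what makes the error $\langle\Ein,\dot g\rangle$ reach $O(\phi^{n+2})$ rather than $O(\phi^{n+1})$, once one tracks the $\phi$-orders of $g^{JK}$ and of the variation $f_{JK}$; you correctly flag this as the main point to check.

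Second, your boundary step is stated too breezily. The paper does not simply assert that $U$ ``has no log terms''; it writes the flux integrand $\GG$ explicitly via Proposition~\ref{prop:levicivita} and the expressions~(\ref{eq:f}) for $f_{JK}$, and verifies that the leading power in $\GG$ is $O(\epsilon^{-1})$ while any possible log contribution enters only at $O(\epsilon^{n+1}\log(-\epsilon))$, so that $\epsilon\int_{M^\epsilon}\GG\,d\sigma$ has no $\log(-\epsilon)$ term. Your argument would need that same order-by-order bookkeeping; the claim that the flux ``admits only a power-series expansion'' is the conclusion, not the reason.
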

\begin{proof}
The argument parallels that in the proof of~\cite[Theorem 1.1]{gh} and was given in the integrable CR setting in~\cite{seshadri05}. We shall consider a line of partially integrable almost CR structures $J_t$, each with corresponding (by Theorem~\ref{thm:ache}) approximately Einstein ACH metric ${}^tg$ and log term coefficient $L_t$. We then show that $(d/dt)|_{t=0}L_t = 0$. The first variation $(d/dt)|_{t=0}$ of various quantities below will be denoted with a $\bullet$ superscript.

Suppress $t$-dependence for now and recall from Theorem~\ref{thm:ache} that 
\begin{eqnarray*}
\Ric_{\infty\infty} &=& -2(n+2)g_{\infty\infty} + O(\phi^n);\\
\Ric_{00} &=& -2(n+2)g_{00} + O(\phi^n);\\
\Ric_{AB} &=& -2(n+2)g_{AB} + O(\phi^{n+1}).
\end{eqnarray*}
Thus if $\Scal$ denotes the scalar curvature of $g$,
$$
\Scal = -4(n+2)(n+1) + O(\phi^{n+2}).
$$
Its first variation is $\Scal^\bullet = O(\phi^{n+2})$, so for a small negative $\epsilon_0$, as $\epsilon\to 0$, 
$$ 
\int_{\epsilon_0<\phi<\epsilon} \Scal^\bullet dv = O(1). 
$$ 

We shall now work with the fixed coframe $\{d\phi, \theta, \thetat^A\}$ associated with ${}^0g$, the approximately Einstein ACH metric corresponding to a fixed $J_0$. The standard formula (see, e.g.,~\cite[Theorem 1.174]{besse}) for the first variation of the scalar curvature says that
$$ 
\Scal^\bullet = g_{JK,}^{\bullet\phantom{,JK}{JK}} - g_{J\phantom{K},K}^ {\bullet\phantom{J}J\phantom{,K}K} - \Ric^{JK}g_{JK}^\bullet, 
$$ 
or after using the Einstein condition, 
$$ 
\Scal^\bullet =  g_{JK,}^{\bullet\phantom{,JK}{JK}} - g_{J\phantom{K},K}^ {\bullet\phantom{J}J\phantom{,K}K} + 2(n+2)g^{JK}g_{JK}^\bullet + O(\phi^{n+2}). 
$$
Here covariant derivatives are with respect to the Levi-Civita connection of $g$ and indices are raised and lowered using $g$. Integrating gives 
$$ 
\int_{\epsilon_0<\phi<\epsilon} (g_{JK,}^{\bullet\phantom{,JK}{JK}} - g_{J\phantom{K},K}^ {\bullet\phantom{J}J\phantom{,K}K})dv + 4(n+2)\int_{\epsilon_0<\phi<\epsilon}  dv^\bullet= O(1), 
$$ 
where we have used the standard formula~\cite[Proposition 1.186]{besse} in the second integral. Applying the divergence theorem to this gives
$$
-4(n+2)\Vol_{g}^\bullet(\{\epsilon_0<\phi<\epsilon\}) = \int_{\{\phi = \epsilon\}} (g_{JK,}^{\bullet\phantom{,JK}J}  - g_{J\phantom{K},K}^{\bullet\phantom{J}J\phantom{,K}})\nu^Kd\sigma + O(1), 
$$ 
where $\nu$ denotes the unit outward normal to $M^\epsilon = \{\phi = \epsilon\}$ and $d\sigma$ denotes the induced volume element. But $\nu = 2\epsilon\NNT$, hence $\nu^\infty = 2\epsilon, \nu^0 = 0$ and $\nu^A = 0$. Thus if we set
$$ 
\GG := g_{J\infty,}^{\bullet\phantom{,J0}{J}}  - g_{J\phantom{J},\infty}^ {\bullet\phantom{J}J}
$$ 
we have that 
\begin{equation} 
\label{eq:Vdot} 
\Vol_{g}^\bullet(\{\epsilon_0<\phi<\epsilon\}) = -\frac{\epsilon}{2(n+2)}\int_{\{\phi = \epsilon\}}\GG d\sigma + O(1). 
\end{equation}
To prove the theorem, it thus suffices to show that there is no $\log(-\epsilon)$ term on the right-hand side of (\ref{eq:Vdot}).

Let us make the $t$-dependence explicit again and write 
\begin{equation}
\label{eq:tg}
{}^tg = {}^0g_{JK}\thetat^J\odot \thetat^K + tf_{JK}\thetat^J\odot \thetat^{K} + O(t^2). 
\end{equation}
Then  
\begin{equation} 
\label{eq:mathcalA} 
\begin{split}
\GG &= g^{JK}(f_{J\infty,\:K} - f_{JK,\:\infty})\\
            &= \phi^2s^{-1}(f_{0\infty,\:\infty} - f_{00,\:\infty}) - \phi\hhatinv^{AB}(f_{A\infty,\:B} - f_{AB,\:\infty}).
\end{split}
\end{equation} 

In order to obtain an expression for the asymptotic expansion in powers of $\phi$ of $f_{JK}$, we notice that we can also write 
\begin{equation} 
\label{eq:tg2} 
{}^tg = \frac{1}{4\phi^2}{}^t\thetat^\infty\odot {}^t\thetat^\infty + \frac{{}^ts}{\phi^2}{}^t\thetat^0\odot {}^t\thetat^0 - \frac{1}{\phi}{}^t\hhat_{AB}{}^t\thetat^A\odot {}^t\thetat^B, 
\end{equation} 
where $\{{}^t\thetat^J\}, {}^ts$ and ${}^t\hhat_{AB}$ now depend on $t$.

Write 
$$ 
{}^t\thetat^J = \thetat^J + tx_{K}^{\phantom{K}J} \thetat^{K} + O(t^2), 
$$ 
for some $O(1)$ tensor $x$,
$$
{}^ts = s + t\st + O(t^2), 
$$ 
for some $O(1)$ function $\st$, and
$$
{}^t\hhat_{AB} = \hhat_{AB} + t\ft_{AB} + O(t^2), 
$$
for some matrix of $O(1)$ functions $\ft_{AB}$. If we substitute the above three formulae into (\ref{eq:tg2}) and compare with (\ref{eq:tg}) we obtain an expression for $f_{JK}$:
\begin{equation}
\label{eq:f}
\begin{split}
&f_{\infty\infty} = \frac{1}{2\phi^2}x_{\infty}^{\phantom{\infty}\infty};\quad
f_{0\infty} = \frac{1}{2\phi^2}x_{0}^{\phantom{0}\infty} + \frac{2s}{\phi^2}x_{\infty}^{\phantom{\infty}0};\quad
f_{00} = \frac{1}{\phi^2}(\ft_{00} + 2sx_{0}^{\phantom{0}0});\\
&f_{A\infty} = \frac{1}{2\phi^2}x_{A}^{\phantom{A}\infty} - \frac{2}{\phi}\hhat_{AB}x_{\infty}^{\phantom{\infty}B};\quad
f_{AB} = -\frac{1}{\phi}(\ft_{AB} + 2\hhat_{AC}x_{B}^{\phantom{B}C}).
\end{split}
\end{equation}

Now return to (\ref{eq:Vdot}). The leading log term to appear in $d\sigma$ is order $O(\epsilon\log(-\epsilon))$. So a term of order $O(\epsilon^{-2})$ in $\GG$ would combine with this to give a $\log(-\epsilon)$ term on the right-hand side of (\ref{eq:Vdot}). On the other hand, if there were a term in $\GG$ of the form $O(\epsilon^n\log(-\epsilon))$, combination with the lowest order $O(\epsilon^{-n-1})$ term in $d\sigma$ would again produce a $\log(-\epsilon)$ term. To complete the proof of the theorem then it suffices to show that the leading term in $\GG$ is $O(\epsilon^{-1})$ and the leading log term in  $\GG$ is $O(\epsilon^{n+1}\log(-\epsilon))$. But this is seen to be true by examining (\ref{eq:mathcalA}) and (\ref{eq:f}) while using the definition of Levi-Civita covariant differentiation
$$
f_{A\infty,\:B} = W_Bf_{A\infty} - f_{J\infty}\psi_{A\phantom{J}B}^{\phantom{A}J} - f_{AJ}\psi_{\infty\phantom{J}B}^{\phantom{\infty}J},
$$
together with Proposition~\ref{prop:levicivita}.
\end{proof} 

Since the partially integrable almost CR structures admitted by $(M,H)$ form a contactible homotopy class, Theorem~\ref{thm:L} combined with Proposition~\ref{prop:L} proves the Main Theorem. Furthermore we have:

\begin{prop}
The log term coefficient $L$ is invariant under deformations of the contact structure $H$.
\end{prop}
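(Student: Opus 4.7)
The plan is to combine Gray's stability theorem for contact structures with the manifest diffeomorphism-naturality of the entire ACH construction, and thereby reduce the statement to the Main Theorem.

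First I would invoke Gray's theorem: for any smooth one-parameter family $\{H_t\}_{t\in(-\delta,\delta)}$ of contact structures on the compact manifold $M$, there exists a smooth isotopy $F_t\colon M\to M$ with $F_0 = \mathrm{id}$ such that $(F_t)_\ast H_0 = H_t$. This reduces the problem to proving that $L$ is diffeomorphism invariant, i.e., $L(F_\ast H) = L(H)$ for any diffeomorphism $F$ of $M$.

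Next I would verify this naturality directly by tracing through the construction of $L$. Given $F\colon M\to M$, pick compatible data $(J,\theta)$ for $H$ and form the pushforwards $F_\ast J$ and $F_\ast \theta$, which are compatible data for $F_\ast H$. The TWT connection and its curvature, torsion and Tanno tensors of Propositions~\ref{prop:twt}--\ref{prop:twtcurv} transform naturally; the trivial extension to $\Xb = M\times(-1,0]$ commutes with $F\times\mathrm{id}$; and if $g$ is an approximately Einstein ACH metric for $(H,J,\theta)$ as produced by Theorem~\ref{thm:ache}, then $(F\times\mathrm{id})_\ast g$ is one for the pushed-forward data. The special defining function $\phi$ of Lemma~\ref{lem:special} is characterised by $F\times\mathrm{id}$-equivariant conditions and so transforms naturally, whence the local density $v^{(n+1)}$ on $M$ that computes the log-term coefficient pulls back to its counterpart for $(F_\ast H, F_\ast J, F_\ast \theta)$. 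Its integral against $\theta\wedge(d\theta)^n$ is therefore invariant under $F$, so $L(F_\ast H) = L(H)$.

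Applying this with $F = F_t$ yields $L(H_t) = L((F_t)_\ast H_0) = L(H_0)$ for every $t$, establishing the proposition. I do not expect any real obstacle: Gray's stability is classical, and the naturality step is essentially automatic once one knows (from the Main Theorem) that $L$ depends only on $H$ and not on any auxiliary choice of $J$ or $\theta$. The whole argument simply transports the intrinsic character of $L$ already established to a deformation statement.
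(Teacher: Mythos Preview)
Your proposal is correct and follows essentially the same approach as the paper: invoke Gray's stability theorem to reduce to diffeomorphism invariance of $L$, then use the naturality of the entire ACH construction (choice of $\theta$, $J$, special defining function, approximately Einstein metric) under $F\times\mathrm{id}$ together with change of variables in the integral. The paper phrases the final step as a repetition of the argument in Proposition~\ref{prop:L} with $\phi_\tau$ in place of $\hat\phi$, but this amounts to the same naturality observation you spell out.
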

\begin{proof}
This is a standard application of the well-known result of Gray~\cite{gray} that there are no nontrivial deformations of contact structures. Precisely, if $\{H_\tau\}$ is a one-parameter family of contact structures on $M$ through a specified contact structure $H = H_0$, then there exist a family of diffeomorphisms $\{\FF_\tau\}$ of $M$ such that $H_\tau = (\FF_\tau)_\ast H$. We fix $\tau$ and shall show that the values of $L$ for the contact structures $H$ and $H_\tau$ are the same. 

By Proposition~\ref{prop:L} and Theorem~\ref{thm:L} we may fix a choice of some $\theta$ and $J$ for $H$. Then for $H_\tau$ use $\theta_\tau := (\FF_\tau^{-1})^*\theta$ and $J_\tau := (\FF_\tau^{-1})^*J(\FF_\tau)_\ast$. Let $\phi$ and $\phi_\tau$ be the special defining functions associated respectively to $(\theta, J)$ and $(\theta_\tau, J_\tau)$. Then the argument follows that in the proof of Proposition~\ref{prop:L}, replacing $\phat$ there with $\phi_\tau$.
\end{proof}

\section{The integrable case}
\label{sec:integr}

Suppose that the partially integrable almost CR structure $J$ on $M$ is in fact integrable, i.e., satisfies $[H^{1,0}, H^{1,0}]\subset H^{1,0}$. Then $(M, H, J)$ is a \emph{CR manifold}. If $n \geq 2$, so that $M$ has dimension five or greater, it is then a classical result that $M$ forms the boundary of a strictly pseudoconvex domain in a complex manifold. Living on a one-sided neighbourhood $X$ of $M$ is the approximately Einstein complete K\"ahler metric $g_+$ of Fefferman~\cite{fef76}. Volume renormalization with respect to $g_+$ was carried out in~\cite{seshadri04}. 

If $n=1$, $M$ is three-dimensional and $J$ is automatically integrable. On the other hand, in this dimension, $M$ does not in general admit a global CR embedding. In the special case where three-dimensional $M$ \emph{is} embeddable, Fefferman's approximately Einstein complete K\"ahler metric may be used for volume renormalization in the same way as for higher dimensions (\cite{seshadri04}). If three-dimensional $M$ is \emph{not} embeddable the methods of~\cite{seshadri04} do not work. This general case is instead handled by Herzlich~\cite{herzlich}. The relation between our volume renormalization methods in this paper and those of Herzlich are not clear. However, let us remark that when $n=1$ the log term coefficient $L$ always vanishes for both approaches by basic invariant theory, see, e.g., the proof of Theorem 9.1 in~\cite{bhr}. Since the present paper is predominantly concerned with this log term coefficient, for this section we shall assume $n\geq 2$ and investigate the relation between this paper and our earlier work~\cite{seshadri04}.

Let us briefly recall some basic facts from~\cite{seshadri04}. For an  arbitrary defining  function $\phi$ for $M = M^0$, each  level set $M^{\epsilon} := \{\phi = \epsilon\}$ is strictly  pseudoconvex with the natural CR bundle $\HHH^{1,0}_\epsilon := T^{1,0}\Xb\cap\C TM^\epsilon$ and contact form $(i/2)(\overline{\partial}\phi - \partial\phi)|_{M^\epsilon}$. Let  $\HHH^{1,0}\subset\C T\Xb$ denote the bundle whose fibre over each $M^{\epsilon}$ is $\HHH^{1,0}_{\epsilon}$.  The restriction of  $i\partial\overline{\partial}\phi$ to $\HHH^{1,0}$ is positive definite. This means $\partial\overline{\partial}\phi$ has  precisely one null direction transverse to $\HHH^{1,0}$, whence there is a  uniquely defined $(1,0)$ vector field $\xi$ that satisfies 
$$ 
\xi\perp_{\partial\overline{\partial}\phi}\HHH^{1,0};\quad\partial\phi(\xi) = 1. 
$$  

Let  $\{W_{\alpha}\}$ be any local frame for $\HHH^{1,0}$. Since  $\xi$ is transverse to $\HHH^{1,0}$, the set of vector fields $\{W_{\alpha}, \xi\}$  is a local frame for $T^{1,0}\Xb$. The dual $(1,0)$ coframe is then  of the form $\{\vartheta^\alpha, \partial\phi\}$ for some $(1,0)$-forms  $\{\vartheta^{\alpha}\}$ that annihilate $\xi$. We may  write 
$$
\ddbar\phi = h_{\al\bbar}\vartheta^{\alpha}\wedge\vartheta^{\bbar}  + r\partial\phi \wedge\dbar\phi,
$$ 
for a positive definite Hermitian matrix of functions $h_{\al\bbar}$ and a real-valued function $r$. 

We may identify $M \times (-1,0]$ with $\Xb$ by following the unit-speed integral curve of $\Re\:\xi$, emanating from a point $x$ on $M$, for time $\phi$. 
 
Now the K\"ahler form $\omega$ of $g_+$ is given as
$$
\omega = \ddbar\log(-1/\rho),
$$
where $\rho$ is an approximate Monge--Amp\'ere defining function (see~\cite{fef76},~\cite{seshadri04}). Using this $\rho$ in place of $\phi$ in the previous discussion, we may write
$$
g_+ = -\frac{1}{\rho}h_{\al\bbar}\vartheta^{\alpha}\odot \vartheta^{\bbar} +  \frac{1-r\rho}{\rho^2}\partial\rho\odot\dbar\rho.
$$
One can now verify that $g_+$ is ACH according to Definition~\ref{df:ach}. In fact it was consideration of this special K\"ahler case that led us to Definition~\ref{df:ach} in the first place.

Moreover the special defining function $\phi$, for a contact form $\theta$, used in~\cite{seshadri04} is characterised by the existence of $(1,0)$-forms $\{\varthetat^{\alpha}\}$, with $\varthetat^{\alpha}|_M = \vartheta^\alpha|_M$, such that 
\begin{equation}
\label{eq:g_+}
g_+ = -\frac{1}{\phi}\hhat_{\al\bbar}\varthetat^{\alpha}\odot \varthetat^{\bbar} +  \frac{1}{\phi^2}\partial\phi\odot\dbar\phi,
\end{equation}
and 
$$
\frac{i}{2}(\dbar\phi - \partial\phi)|_M = \theta,
$$
where $\hhat_{\al\bbar}$ is a positive definite Hermitian matrix of functions with $\hhat_{\al\bbar}|_M = h_{\al\bbar}$. Setting $\vartheta := (i/2)(\dbar\phi - \partial\phi)$, (\ref{eq:g_+}) can be rewritten as
$$
g_+ = -\frac{1}{\phi}\hhat_{\al\bbar}\varthetat^{\alpha}\odot \varthetat^{\bbar} +  \frac{1}{4\phi^2}d\phi\odot d\phi + \frac{1}{\phi^2}\vartheta\odot\vartheta.
$$
Replacing $\vartheta$ with $(1/2)\vartheta$ gives instead
$$
g_+ = -\frac{2}{\phi}\hhat_{\al\bbar}\varthetat^{\alpha}\odot \varthetat^{\bbar} +  \frac{1}{4\phi^2}d\phi\odot d\phi + \frac{4}{\phi^2}\vartheta\odot\vartheta.
$$
Comparing this expression with (\ref{eq:diag}), (\ref{eq:hhatM}) and Lemma~\ref{lem:special} it is clear that the special defining function of~\cite{seshadri04} coincides with that used in the present paper.

Now from~\cite{fef76} and~\cite{seshadri04} (taking care with conventions for Ricci and K\"ahler forms), $g_+$ satisfies the approximately Einstein condition stated in Theorem~\ref{thm:ache}. Thus by the second assertion of that theorem, $g_+$ coincides, as a Riemannian metric, with an approximately Einstein ACH metric, modulo high-order error terms and diffeomorphism action. Therefore the asymptotic expansion for the volume form of $g_+$ produced via the complete K\"ahler approach in~\cite{seshadri04} must coincide with that produced via the ACH methods in this paper. In particular, the log term coefficients $L$ produced by by the two approaches must agree. 

Turning to the obstruction tensors, it is well known that in the integrable case the obstruction to $g_+$ being an Einstein--K\"ahler metric (i.e., having Einstein tensor vanishing to all orders), with $\phi g_+|_H$ extending smoothly to $M$, is given purely as a scalar function $f$. The function $f$ may be identified with the boundary value of the first log term in the asymptotic expansion, in powers of Fefferman's approximate solution $\rho$, for the solution to a complex Monge--Amp\'ere equation. It is known to transform according to $\fh = e^{-2(n+2)\Upsilon}f$ under a change in contact form $\that = e^{2\Upsilon}\theta$.  See~\cite{fef76},~\cite{lm} and~\cite{graham87} for details. It remains to be investigated the precise relation between $f$ and the obstructions $\BB$ and $\OO_A$ appearing in Theorem~\ref{thm:ache}. 

\section{Concluding remarks}
\label{sec:remarks}

\subsection{CR $Q$-curvature} As alluded to in the Introduction, one of the motivating factors behind this work was to shed some light on the mysterious CR $Q$-curvature. This quantity was introduced by Fefferman--Hirachi~\cite{fh}, as an analogy of a quantity in conformal geometry. In~\cite{seshadri04} we gave an alternative description of $Q^{\textnormal{CR}}$ and further showed that
$$
\int_M Q^{\textnormal{CR}}_\theta\: \theta\wedge (d\theta)^n = \const \times L,
$$
where $L$ is the log term coefficient in the volume renormalization of Fefferman's approximately Einstein complete K\"ahler metric.

The present work has shown that $L$ may be defined for only partially integrable almost CR structures and that it is moreover a contact invariant. It is natural to try to generalise CR $Q$-curvature to this partially integrable setting. In fact this may be done fairly easily, proceeding analogously to~\cite[Appendix A]{seshadri04}. Tentatively calling this quantity ``contact $Q$-curvature'', it is also not hard to show that
$$
\int_M Q^{\textnormal{contact}}_\theta\: \theta\wedge (d\theta)^n = \const \times L,
$$
where this time $L$ is the log term coefficient in the volume renormalization of our approximately Einstein ACH metric.

The difficulty arises in comparing $Q^{\textnormal{contact}}$ defined by this method with a definition analogous to that of $Q^{\textnormal{CR}}$ in~\cite{fh}. Since Fefferman--Hirachi's definition of $Q^{\textnormal{CR}}$ uses Fefferman's conformal structure associated to a CR manifold, one would need to use a generalisation of that structure to the partially integrable case. Such a generalisation does exist, see~\cite{bd}, however while in the integrable case Fefferman's conformal structure and ambient metric are both intimately related to Fefferman's approximately Einstein complete K\"ahler metric, the relation between the generalised Fefferman structure and a suitable ambient metric, and the approximately Einstein ACH metric is not clear.

In dimension three, we know that $L$, or equivalently $\int_M Q^{\textnormal{CR}}$, always vanishes. Hirachi and others have asked the question ``Does the integral of CR $Q$-curvature always vanish in higher dimensions?''. One would have an affirmative answer to this question if it were that $L$ for partially integrable almost CR structures always vanishes. As we speculate in the next subsection, the contact-invariance of $L$ could help to settle the question of its vanishing.

\subsection{Contact invariants} In recent years, several invariants of contact structures have been defined via local differential geometric techniques. Boutet de Monvel~\cite{boutet04} proved that the logarithmic trace of generalised Sz\"ego projectors is a contact invariant, but later in~\cite{boutet06} showed that it always vanishes. Ponge~\cite{ponge} develops a large class of contact invariants via the noncommutative residue traces of Heisenberg-pseudodifferential projections. Biquard--Herzlich--Rumin~\cite{bhr} showed that the residue at zero of an eta function coming from Rumin's contact complex is a contact invariant. In~\cite{seshadri07} we prove that the regular value at zero of a well-chosen combination of zeta functions coming from the contact complex is a contact invariant. However, no nonvanishing examples of these invariants are known. (Incidentally, vanishing of Biquard--Herzlich--Rumin's invariant is a necessary and sufficient condition for defining an eta invariant of the contact complex---see~\cite[\S 9]{bhr}.) The interested reader should read Ponge's discussion in~\cite[\S 4.3]{ponge} for further information.

It is likely that the invariants just described are of the same form as the contact invariant $L$ described in this paper, namely, they are the integrals of local TWT invariants. This reminds us of a result of Gilkey~\cite{gilkey}, which settles a famous conjecture of I.~M.~Singer. Roughly stated it says the following:
\begin{thm}[Gilkey~\cite{gilkey}]
Suppose there is a scalar Riemannian invariant whose integral is independent of the choice of Riemannian metric. Then the smooth topological invariant given by this integral is (a known constant multiple of) the Euler characteristic.
\end{thm}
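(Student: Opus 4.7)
The plan is to combine H.~Weyl's invariant theory with a weighted scaling argument, a first-variation constraint, and a product-manifold reduction, culminating in an appeal to Chern--Gauss--Bonnet. First, invariant theory writes any local scalar Riemannian invariant $I(g)$ as a polynomial in complete contractions, via the metric, of iterated covariant derivatives of the Riemann tensor. Each such monomial is homogeneous of some weight $w$ under $g \mapsto \lambda^2 g$, while $dv_g$ scales as $\lambda^n dv_g$, with $n = \dim M$. Hence $\int_M I(g)\,dv_g$ transforms as $\lambda^{w+n}$, and metric-independence forces $w = -n$; in particular the integral vanishes identically when $n$ is odd. This already restricts $I$ to a finite-dimensional space of curvature monomials of the correct weight.

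Next, demanding that the first variation of $\int_M I(g)\, dv_g$ under $g \mapsto g + t h$ vanish for every symmetric $2$-tensor $h$ yields, after integration by parts on the closed manifold $M$, an Euler--Lagrange identity asserting that the symmetric tensor obtained by functional differentiation of $I$ vanishes identically. Combined with the weight restriction, this leaves a very thin pool of admissible monomials. The decisive narrowing occurs by evaluation on Riemannian products $(M_1 \times M_2, g_1 \oplus g_2)$ whose dimensions sum to $n$: the Riemann tensor of a product splits block-diagonally, so $I$ decomposes into bilinear combinations of lower-dimensional invariants, and compatibility with the K\"unneth-type factorisation $\chi(M_1 \times M_2) = \chi(M_1)\chi(M_2)$ (together with testing on $M_1 \times S^1$, where $\chi$ vanishes) severely constrains the admissible combinations.

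The final step identifies the surviving one-dimensional family with a scalar multiple of the Pfaffian of the curvature, which by Chern--Gauss--Bonnet integrates to a universal constant times $\chi(M)$ in even dimension. The main obstacle will be the combinatorial invariant-theoretic analysis at the product-manifold step: one must show that after imposing the weight, divergence, and product-factorisation constraints, the space of admissible invariants is \emph{exactly} one-dimensional, with no ``exotic'' candidates surviving. This is the technical heart of Gilkey's original argument and relies on careful bookkeeping via Weyl's first and second fundamental theorems for the orthogonal group, together with explicit computations on model spaces such as products of $2$-spheres to rule out any alternative basis elements and to pin down the universal constant.
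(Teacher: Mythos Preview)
The paper does not supply a proof of this statement: it is quoted verbatim as a result of Gilkey, with a citation, in the concluding discussion (\S\ref{sec:remarks}), and no argument is offered or even sketched. So there is no ``paper's own proof'' against which to compare your proposal.

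That said, your outline is a fair summary of the standard strategy behind Gilkey's theorem: Weyl's invariant theory to reduce to polynomials in curvature and its covariant derivatives, the scaling argument to isolate weight $-n$ (hence vanishing in odd dimensions), the first-variation/Euler--Lagrange constraint, product-manifold restrictions, and finally identification with the Pfaffian via Chern--Gauss--Bonnet. One caveat: as you yourself acknowledge, the genuinely hard part is the combinatorial invariant-theoretic step showing the surviving space is exactly one-dimensional. Your sketch gestures at this but does not carry it out, so what you have written is a plausible roadmap rather than a proof. If you intend to present this as a self-contained argument you would need either to execute that step in detail or to cite Gilkey's paper for it---which is precisely what the present paper does.
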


In particular there are \emph{no} topological invariants of odd dimensional manifolds given by integrating local Riemannian invariants. We wonder if an analogous result holds for contact manifolds, so we close with the following:
\begin{question}
If a contact invariant is given as the integral of local TWT invariants, does it always vanish?
\end{question}

\bibliographystyle{smfplain} 
\bibliography{biblio} 

\end{document}